\newtheorem{theorem}{Theorem}[section]
\newtheorem{lemma}[theorem]{Lemma}
\newtheorem{corollary}[theorem]{Corollary}
\newtheorem{proposition}[theorem]{Proposition}
\newtheorem{remark}[theorem]{Remark}
\newtheorem{definition}[theorem]{Definition}
\newcommand{\nc}{\newcommand} 
\nc{\cH}{{\mathcal H}}
\nc{\cA}{{\mathcal A}}
\nc{\cG}{{\mathcal G}}
\nc{\cC}{{\mathcal C}}
\nc{\cO}{{\mathcal O}}
\nc{\cI}{{\mathcal I}}
\nc{\cB}{{\mathcal B}}
\nc{\cY}{{\mathcal Y}}
\nc{\cK}{{\mathcal K}} 
\nc{\cX}{{\mathcal X}}
\nc{\cS}{{\mathcal S}}
\nc{\cE}{{\mathcal E}}
\nc{\cF}{{\mathcal F}}
\nc{\cZ}{{\mathcal Z}}
\nc{\cQ}{{\mathcal Q}}
\nc{\cN}{{\mathcal N}}
\nc{\cP}{{\mathcal P}}
\nc{\cL}{{\mathcal L}}
\nc{\cM}{{\mathcal M}}
\nc{\cT}{{\mathcal T}}
\nc{\cW}{{\mathcal W}}
\nc{\cU}{{\mathcal U}}
\nc{\cJ}{{\mathcal J}}
\nc{\cV}{{\mathcal V}}
\nc{\bH}{{\mathbb H}}
\nc{\bA}{{\mathbb A}}
\nc{\bG}{{\mathbb G}}
\nc{\bC}{{\mathbb C}}
\nc{\bO}{{\mathbb O}}
\nc{\bI}{{\mathbb I}}
\nc{\bB}{{\mathbb B}}
\nc{\bY}{{\mathbb Y}}
\nc{\bK}{{\mathbb K}} 
\nc{\bX}{{\mathbb X}}
\nc{\bS}{{\mathbb S}}
\nc{\bE}{{\mathbb E}}
\nc{\bF}{{\mathbb F}}
\nc{\bZ}{{\mathbb Z}}
\nc{\bQ}{{\mathbb Q}}
\nc{\bN}{{\mathbb N}}
\nc{\bP}{{\mathbb P}}
\nc{\bL}{{\mathbb L}}
\nc{\bM}{{\mathbb M}}
\nc{\bT}{{\mathbb T}}
\nc{\bW}{{\mathbb W}}
\nc{\bU}{{\mathbb U}}
\nc{\bD}{{\mathbb D}}
\nc{\bJ}{{\mathbb J}}
\nc{\bV}{{\mathbb V}}
\nc{\bbZ}{{\mathbb Z}}
\nc{\bR}{{\mathbb R}}
\nc{\fr}{{\rightarrow}}
\nc{\co}{{\nabla}}
\newcommand{\la}{\longrightarrow}
\nc{\cu}{{\barline{\nabla}}}
\DeclareMathOperator{\Pic}{Pic}
\DeclareMathOperator{\Alb}{Alb}
\DeclareMathOperator{\Aut}{Aut}
\DeclareMathOperator{\End}{End}
\DeclareMathOperator{\Hilb}{Hilb}
\DeclareMathOperator{\Spec}{Spec}
\DeclareMathOperator{\Sym}{Sym}
\begin{document}

\title {Dihedral Monodromy and Xiao Fibrations} 
                               %
                               %
\author{Alberto Albano}
\address{Alberto Albano  \\ Universit\`a degli Studi di Torino  \\ Dipartimento di Matematica \\ Via Carlo ALberto 10  \\ 10123 Torino, Italy  }
\email{alberto.albano@unito.it}

\author{Gian Pietro Pirola}
\address{Gian Pietro Pirola  \\ Universit\`a degli Studi di Pavia  \\ Dipartimento di Matematica \\ Via Ferrata, 1  \\ 27100 Pavia, Italy  }
\email{gianpietro.pirola@unipv.it}

\thanks{The second author was partially supported by INdAM (GNSAGA); PRIN 2012 \emph{``Moduli, strutture geometriche e loro applicazioni''}; FAR 2013 (PV) \emph{``Variet\`a algebriche, calcolo algebrico, grafi orientati e topologici''}}

\keywords{Fibrations, irregular surfaces, Prym varieties}
\subjclass[2010]{14D06, 14J29, 14H40}

\begin{abstract} 
We construct three new families of fibrations $\pi : S \to B$ where $S$ is an algebraic complex surface and $B$ a curve that violate Xiao's conjecture relating the relative irregularity and the genus of the general fiber. The fibers of~$\pi$ are certain \'etale cyclic covers of hyperelliptic curves that give coverings of~$\bP^1$ with dihedral monodromy.

As an application, we also show the existence of big and nef effective divisors in the Brill-Noether range.
\end{abstract}

\maketitle

\section{Introduction}
\label{s:intro}
Let $S$ be a smooth complex projective surface, $B$ a smooth curve of genus $b$ and $\pi : S \to B$ a fibration, i.e., a surjective morphism with connected fibers. Let $C$ be the general fiber of $\pi$ and $g_C$ its genus.
 Let $q = \dim H^1(S, \cO_S)$ be the irregularity of $S$ and   $q_\pi = q - b$ the \emph{relative irregularity} of the fibration. 
 The fibration is called \emph{isotrivial} if the smooth fibers are all isomorphic. 
 
 Assume that the fibration is not isotrivial and $b=0,$ that is $B = \mathbb P^1$  is the projective line. Under these hypotheses Xiao proved in \cite{Xi}  the inequality
\[ 
q\leq \frac {g_C + 1}{2}
\] 
and he conjectured that the inequality   
\begin{equation}
q_\pi \leq  \frac {g_C + 1}{2}
\label{xiao}
\end{equation} 
holds in general for non isotrivial fibrations, see also \cite{Xi1}, \cite{moe}.

It was shown in \cite{P}  that this conjecture is false by constructing a fibration~$\pi$ with $g_C = 4$ and $q_\pi = 3$ and the failure of the conjecture was linked to the non triviality of a certain higher Abel-Jacobi map.

This motivates the following
\begin{definition} Let $S$ be a surface. A fibration $\pi: S \to B$ with general fiber~$C$ is called a \emph{Xiao fibration}
if 
\[
q_\pi > \frac{g_C + 1}{2}.
\]
\end{definition}

Up to now the only examples of Xiao fibrations were the ones constructed in~\cite{P}. In this paper we construct three new families of Xiao fibrations associated to cyclic \'etale covers of hyperelliptic curves.  

Let us explain the main idea: let $E$ be an hyperelliptic curve of genus~$g$ and $f : C \to E$ a cyclic \'etale cover of odd prime order~$p$. In this situation the hyperelliptic involution lifts to an automorphism of~$C$ and let $D$ be the quotient of~$C$ by this automorphism (see \cite{Ac}, \cite{Far}). The lift of the involution and the deck transformations of the \'etale cover generate a dihedral group~$D_p$ of automorphisms of $C$. This group is also the monodromy of the induced ramified cover $D \to \mathbb{P}^1$. 

Let $P(C, E)$ be the (generalized) Prym variety associated to~$f$. In~\cite{R} it is proved that
$P(C,E)$ is the product of the jacobian~$J(D)$ with itself and hence $J(C)$ is isogenous to $J(D) \times J(D) \times J(E)$.
We have $g_C = p(g - 1) + 1$ and $g_D = (p - 1)(g - 1)/2$.

This construction gives a map
\begin{equation}
\psi : \cH_{g,p} \to \cM_{(p - 1)(g - 1)/2}
\end{equation}
 from the moduli space $\cH_{g,p}$ of unramified cyclic covers of degree~$p$ of hyperelliptic curves of genus~$g$ to the moduli space $\cM_{g_D}$ of curves of genus $g_D$.
 
We study the fibers of $\psi$ and determine when they are positive dimensional (Proposition~$\ref{finitefibers}$). In those cases, an irreducible component of the fiber gives a family of curves~$C$ whose Jacobians have a fixed part~$J(D) \times J(D)$. 

In general, from a family of curves one can construct fibrations that have the curves~$C$ of the family as fibers. In our situation, the geometry of the \'etale covers allows us to construct these fibrations as subvarieties of an appropriate Hilbert scheme of the surface~$D\times D$
and hence we have a lower bound on the relative irregularity. We will prove the
\begin{theorem}
\label{maintheorem}
There exist Xiao fibrations $\pi : S \to B$ with general fiber $C$ in the following cases:
\begin{enumerate}
\item $E$ of genus~$g = 2$ and covers of degree~$p = 5$. This gives $g_C = 6$, $q_\pi = 4$; 
\item $E$ of genus~$g = 4$ and covers of degree~$p = 3$. This gives $g_C = 10$, $q_\pi = 6$; 
\item $E$ of genus~$g = 3$ and covers of degree~$p = 3$. This gives $g_C = 6$, $q_\pi = 4$;
\end{enumerate}
\end{theorem}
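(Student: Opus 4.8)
The plan is to treat all three cases uniformly and reduce the existence of a Xiao fibration to the positive-dimensionality of a fiber of $\psi$, together with an explicit embedding of the corresponding family into a Hilbert scheme of $D \times D$. First I would fix one of the three numerical types $(g,p)$ from the list and, using Proposition~\ref{finitefibers}, locate an irreducible component $Z$ of a fiber $\psi^{-1}([D])$ of positive dimension $d \ge 1$; by shrinking $Z$ I may assume it is a smooth quasi-projective variety and that over it there is a universal family $\mathcal{C} \to Z$ of curves $C$ of genus $g_C$, each carrying its cyclic \'etale cover structure $f\colon C \to E$ and the dihedral action of $D_p$. The key point furnished by~\cite{R} is that for every such $C$ the Prym $P(C,E)$ is canonically $J(D) \times J(D)$ with $[D]$ \emph{fixed}; thus $J(C) \sim J(D)^2 \times J(E)$, and the only variable part is $J(E)$ (and the gluing data).

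Next I would realize the family geometrically on the \emph{fixed} surface $Y = D \times D$. The dihedral quotient picture gives, for each $C$ in the family, a map $C \to D \times D$ (coming from the two degree-$(p-1)/2$ projections $C \to D$ associated to the two conjugacy classes of order-$2$ elements in $D_p$, or more precisely from the two Prym-canonical-type maps onto the two $J(D)$ factors realized by $D$), which I expect to be an embedding, or at least finite onto its image, after possibly composing with a fixed auxiliary projective embedding. Doing this for all members of $Z$ produces a morphism $Z \to \Hilb(Y')$ for a suitable fixed projective surface $Y'$ birational to or built from $D \times D$, whose image is a positive-dimensional subvariety $W \subseteq \Hilb(Y')$ parametrizing the curves $C$. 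Taking a general complete curve $B \to W$ (possible since $W$ has dimension $\ge 1$ and, after compactifying and normalizing, admits a finite map from a smooth curve) and pulling back the universal family over $\Hilb(Y')$, I obtain a fibered surface $\pi\colon S \to B$ whose general fiber is a smooth curve $C$ of genus $g_C$. Non-isotriviality is ensured because the modular map $B \to \mathcal{M}_{g_C}$ is non-constant: the curves in $Z$ are non-isomorphic as one varies $[E]$, since $J(E)$ is a genuinely moving isogeny factor of $J(C)$.

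To compute the relative irregularity I would bound $q_\pi$ from below using the fact that the family sits inside the \emph{fixed} abelian surface-part: the two projections $C \to D$ are locally constant in the family (the target $D$ does not move), so they induce, on the relative Jacobian / on $H^{1,0}$, a rank-$2g_D$ subsystem of $R^1\pi_*\mathcal{O}_S$ that is (up to isogeny) the constant system $H^{1,0}(D)^{\oplus 2}$ pulled back from a point. Concretely, the surface $S$ admits two morphisms to $D$ (from the two projections to the two copies of $D$), hence to the abelian variety $J(D)$, giving $2g_D$ independent holomorphic $1$-forms on $S$ that do not come from $B$; this yields
\begin{equation}
q_\pi \;\ge\; 2 g_D \;=\; (p-1)(g-1).
\end{equation}
A case-by-case check of the three numerical types then shows $2g_D > (g_C+1)/2$: for $(g,p)=(2,5)$ one gets $g_C=6$, $2g_D=4 > 7/2$; for $(g,p)=(4,3)$, $g_C=10$, $2g_D=6 > 11/2$; for $(g,p)=(3,3)$, $g_C=6$, $2g_D=4 > 7/2$. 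Hence $\pi$ is a Xiao fibration with the stated invariants. (In cases where the bound $q_\pi \ge 2g_D$ is not yet sharp enough to read off the exact value $q_\pi=4,6,4$ claimed, I would refine it by also accounting for the part of $H^{1,0}(S)$ coming from $B$ itself and from any $J(E)$-contribution that happens to be forced, but for the \emph{Xiao} property only the lower bound is needed.)

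The main obstacle I anticipate is the second step: showing that the family $Z \to W \subseteq \Hilb(D\times D)$ is genuinely nonconstant, i.e.\ that varying $[E]$ in the positive-dimensional component $Z$ actually moves the embedded curve inside the \emph{fixed} surface $D \times D$ rather than producing an isotrivial family there. Equivalently, one must verify that the maps $C \to D$ do not all factor through a single fixed $C_0 \to D$; this is where the precise description of the fibers of $\psi$ from Proposition~\ref{finitefibers} — in particular the dimension count and the identification of the moduli directions with deformations of the gluing $J(E) \hookrightarrow J(C)$ — must be invoked carefully, and where the hyperellipticity of $E$ (which pins down $D \to \mathbb{P}^1$ as a dihedral cover) is essential. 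The remaining steps — smoothness and connectedness of the general fiber, compactification of $B$, and the Hodge-theoretic lower bound on $q_\pi$ — I expect to be routine once the family is in place.
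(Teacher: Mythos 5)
Your strategy for cases (1) and (2) is essentially the paper's general construction: use Proposition~\ref{finitefibers} to get a positive-dimensional component of a fiber of $\psi$, map it to $\Hilb(D\times D)$ via the embedding $C\hookrightarrow D\times D$, pull back the universal family over a curve $B$, and bound $q_\pi\ge 2g_D$ from below using the surjection of the fixed part onto $J(D)\times J(D)$ (this is Lemma~\ref{qrel}; the paper additionally pins down the fiber dimension in case (1) via the Prym codifferential and gives an explicit model in case (2), but for mere existence your ``take a general complete curve in the fiber'' shortcut is acceptable, and your worry about the family being nonconstant in the Hilbert scheme is handled by Lemma~\ref{etale} together with the fact that $(E,H')$ is recovered from the embedded curve).

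However, there is a genuine gap in case (3): your uniform treatment silently uses $g_C=6$ for $(g,p)=(3,3)$, but the genus formula $g_C=p(g-1)+1$ (which you use correctly in the other two cases) gives $g_C=7$ there. For the generic member of the two-dimensional fiber $\psi^{-1}([D])$ one therefore gets fibrations with fiber genus $7$ and $q_\pi=2g_D=4$, and since $4\not>\tfrac{7+1}{2}$ this is \emph{not} a Xiao fibration; your case-by-case inequality ``$g_C=6$, $2g_D=4>7/2$'' is inconsistent with your own setup. The paper's proof of case (3) requires an extra idea that your proposal lacks: restrict to the special one-parameter subfamily of covers $f_P:D\to\bP^1$ given by the pencils $|3P|$, $P\in D$ not a Weierstrass point, for which the curve $C_P\subset D\times D$ acquires a node at $(P,P)$; one then desingularizes the resulting surface along the section $P\mapsto(P,P,P)$ so that the general fiber becomes the normalization, of geometric genus $6$, while the relative irregularity stays $\ge 4$. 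Without this degeneration-and-normalization step (or some substitute producing genus-$6$ fibers with the same fixed part), case (3) of the theorem is not proved by your argument.
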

First of all in section~$\ref{Xiaofibr}$ we construct the fibrations associated to the positive dimensional fibers of~$\psi$ and then
we analyze the three cases respectively in sections~$\ref{g2p5}$, $\ref{g4p3}$ and~$\ref{g3p3}$. 

In case~$1$ we compute the differential of the Prym map to show that the fibers of~$\psi$ have dimension~$1$ and hence the fibrations are in fact surfaces.

In case~$2$ we find that an irreducible component of the fiber of~$\psi$ is the curve $D$ itself. This allows us to construct the surface~$S$ as a ramified double cover of~$D \times D$. From this explicit description we can compute all the invariants of the surface~$S$ (Theorem~$\ref{invariants})$.

In case~$3$ let $F$ be the fiber of~$\psi$. Then $F$ has dimension~$2$ and generically the genus of $C$ is~$7$ so we obtain a threefold such that for a general curve $X$ inside $F$ the corresponding fibered surface is not a Xiao fibration. We then compactify the fiber of~$\psi$ and analyze the singular curves we obtain at the limit. One can normalize these curves obtaining a surface with the same relative irregularity and geometric genus of the generic fiber equal to~$6$, giving again a Xiao fibration.

\medskip
We note that for the Xiao fibration found in~\cite{P} as well as for all these new ones, the Xiao conjecture fails only by $1/2$, i.e., in all cases one has
\begin{equation}
\label{xiaoline}
q_\pi = \left\lceil \dfrac{g_C + 1}{2} \right\rceil 
\end{equation}

This bound has appeared recently in the work of Barja, Gonz\'alez-Alonso and Naranjo (see \cite{V} and \cite{BGN}). The main result of~\cite{BGN} says that for a non isotrivial fibration $\pi : S \to B$ one has
\begin{equation*}
q_\pi \le g_C - c_\pi
\end{equation*}
where $c_\pi$ is the Clifford index of the general fiber. When $c_\pi = \left\lfloor \dfrac{g_C - 1}{2} \right\rfloor$, i.e., $c_\pi$ is equal to the Clifford index of the general curve of genus~$g$, the previous inequality becomes
\begin{equation}
\label{xiaomod}
q_\pi \le \left\lceil \dfrac{g + 1}{2} \right\rceil 
\end{equation} 
In~\cite{BGN} it is conjectured that the inequality~$(\ref{xiaomod})$ holds for all non isotrivial fibrations. Our work seems to confirm this conjecture. It is an interesting problem to provide examples of Xiao fibrations with $g_C$ arbitrarily high.

\medskip
Our examples in cases~$1.$ and~$2.$ provide also an answer to a question posed in~\cite{mpp} (Question~$8.6$). In fact we have
\begin{proposition}
There exist surfaces $S$ and nef and big effective divisors $C$ on $S$ in the Brill-Noether range, i.e., such that $q(S) < g_a(C) < 2q(S) - 1$, where $g_a(C)$ is the arithmetic genus of~$C$.
\end{proposition}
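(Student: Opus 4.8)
The plan is to exhibit the required divisor $C$ directly on the product surface $S=D\times D$, using the dihedral action on the cyclic cover. Recall the structure of $D_p=\langle\sigma,\iota\rangle$: the element $\sigma$ has order~$p$ and generates the deck group of the \'etale cover $f:C\to E$, the element $\iota$ is the lift of the hyperelliptic involution, $\iota\sigma\iota^{-1}=\sigma^{-1}$, the $p$ reflections $\iota\sigma^{j}$ are pairwise conjugate (because $p$ is odd), and the quotient of $C$ by any reflection is isomorphic to $D$.

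First I would choose two distinct reflections $\tau_1,\tau_2\in D_p$, let $q_i:C\to C/\langle\tau_i\rangle\cong D$ be the associated degree-$2$ quotient maps, and form $\mu:=(q_1,q_2):C\to S=D\times D$. The crucial claim is that $\mu$ is a closed embedding. If $\mu(x)=\mu(y)$ with $x\ne y$, then $q_i(x)=q_i(y)$ forces $y\in\{x,\tau_i x\}$ for $i=1,2$, hence $\tau_1 x=y=\tau_2 x$, so the rotation $\tau_2\tau_1\in\langle\sigma\rangle$ fixes $x$; but $\tau_1\ne\tau_2$ makes $\tau_2\tau_1$ a nontrivial power of $\sigma$, which has no fixed point because $f$ is \'etale. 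The same remark shows that no point of $C$ is fixed simultaneously by $\tau_1$ and $\tau_2$, so at every $x\in C$ at least one of $dq_{1,x},dq_{2,x}$ is nonzero; hence $d\mu$ is injective everywhere. An injective immersion of a complete curve is a closed embedding, so $\bar C:=\mu(C)\subset S$ is a smooth irreducible curve isomorphic to $C$, and in particular $g_a(\bar C)=g_C$.

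Next I would compute on $S=D\times D$. With $\mathfrak c=[\{*\}\times D]$ and $\mathfrak r=[D\times\{*\}]$ one has $\bar C\cdot\mathfrak c=\deg q_1=2$ and $\bar C\cdot\mathfrak r=\deg q_2=2$, and since $K_S=(2g_D-2)(\mathfrak c+\mathfrak r)$, adjunction gives
\[
\bar C^2=2g_a(\bar C)-2-\bar C\cdot K_S=2g_C-8g_D+6 ,
\]
while $q(S)=q(D\times D)=2g_D$. Substituting $g_C=p(g-1)+1$ and $2g_D=(p-1)(g-1)$, one checks that $q(S)<g_a(\bar C)$ holds always, that $g_a(\bar C)<2q(S)-1$ is equivalent to $(p-2)(g-1)>2$, and that $\bar C^2>0$ is equivalent to $(p-2)(g-1)<4$. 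Thus both conditions hold exactly when $(p-2)(g-1)=3$, that is for $(g,p)=(2,5)$ and $(g,p)=(4,3)$ --- precisely cases~$1$ and~$2$ of Theorem~\ref{maintheorem}. There $\bar C^2=2>0$, so the irreducible curve $\bar C$ is nef (its self-intersection is nonnegative) and big (it is positive), and $(q(S),g_a(\bar C))$ equals $(4,6)$, respectively $(6,10)$, which lies in the Brill--Noether range --- this is exactly the numerical shadow of the equality~$(\ref{xiaoline})$ for these examples. (For case~$3$ one gets $(p-2)(g-1)=2$, hence $g_a(\bar C)=2q(S)-1$, falling just outside the open range, in agreement with the fact that only cases~$1$ and~$2$ are claimed.)

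The step I expect to be the main obstacle is the claim that $\mu$ is a closed embedding, rather than merely a morphism birational onto a possibly singular image: both the injectivity of $\mu$ on points and the injectivity of $d\mu$ come down to the freeness of the order-$p$ subgroup of $D_p$ acting on $C$, i.e.\ to $f$ being \'etale --- the feature separating this setting from a general cyclic cover, and the reason a singular image (which would push $g_a(\bar C)$ out of the range) cannot occur. Once the embedding is established, what remains is the one-line intersection computation on $D\times D$ recorded above.
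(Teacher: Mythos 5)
Your proposal is correct and follows essentially the same route as the paper: the map $\mu=(q_1,q_2)$ is exactly the paper's embedding $\gamma=(\rho_j,\rho_{j_1})$ of Lemma~\ref{etale} (two quotients by distinct reflections, with injectivity of the map and of its differential reduced to the fixed-point-freeness of the deck group), and your adjunction computation $\bar C^2=2g_C-8g_D+6=8-2(g-1)(p-2)$ reproduces Lemma~\ref{C2}, after which the numerical check of the Brill--Noether range in cases $1$ and $2$ is the same as in the paper. The only difference is that you re-derive these two lemmas inline and make the case analysis in $(p-2)(g-1)$ explicit, which the paper leaves implicit.
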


\begin{proof}
In cases~$1.$ and~$2.$ the curves~$C$ embed into $S = D \times D$ with positive self-intersection by Lemma~$\ref{etale}$ and~$\ref{C2}$. Since $q(S) = 2 g(D)$ we have $q(S) = (p-1)(g-1)$ and since $C$ is smooth we have $g_a(C) = g_C = p(g-1) +1$, and so $C$ is in the Brill-Noether range.
\end{proof}

In case~$2.$ the divisor~$C$ is even ample (see Remark~$\ref{sym2}$).

\smallskip
\textbf{Acknowledgements.} The authors thank Rita Pardini for catching an error in the first draft of this paper.

\section{Dihedral groups and hyperelliptic curves}

We recall the following well known result (see \cite{Ac}, \cite{Far}, \cite{R}):

\begin{proposition} Let  $C \to E$ be an \'etale abelian Galois cover, where $E$ is an hyperelliptic curve. Then the hyperelliptic involution lifts to an involution on~$C$. If morever the Galois group is cyclic, then the group generated by the Galois group and a lift of the involution is a dihedral group~$D_n$ of order~$2n$, where $n$~is the order of the Galois group.
\end{proposition}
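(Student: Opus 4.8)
The plan is to prove the two assertions in turn: first that the hyperelliptic involution lifts, and then that together with the Galois group it generates a dihedral group. For the first part, let $\iota : E \to E$ be the hyperelliptic involution and $\pi : C \to E$ the \'etale abelian Galois cover, classified by a subgroup $H \subseteq \Pic^0(E)[n]$ (for $C$ connected, $H$ cyclic of order $n$ in the cyclic case) of torsion line bundles. The obstruction to lifting $\iota$ to $C$ is precisely whether $\iota^* H = H$ as a subgroup of $\Pic^0(E)$. So the key input is the classical fact that $\iota$ acts as $-1$ on $\Pic^0(E)$: indeed for a hyperelliptic curve every line bundle $L$ of degree $0$ satisfies $\iota^* L \cong L^{-1}$, because $\iota^*$ acting on $H^0(E, \omega_E)$ is multiplication by $-1$ (the hyperelliptic pencil being $\iota$-invariant forces the regular differentials to be anti-invariant), hence $\iota^*$ is $-1$ on $H^1(E,\cO_E)$ and so on $\Pic^0(E)$. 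Since any subgroup $H$ is stable under $x \mapsto -x$, we get $\iota^* H = H$, the cover $\iota^* C \to E$ is isomorphic to $C \to E$ over $\iota$, and therefore $\iota$ lifts to an automorphism $\sigma$ of $C$. One then arranges $\sigma$ to be an involution: $\sigma^2$ lies over $\iota^2 = \mathrm{id}_E$, so $\sigma^2 \in \mathrm{Gal}(C/E) = H$; since $H$ has odd order in the relevant (odd prime degree) case one can replace $\sigma$ by $\sigma \circ h$ for a suitable $h \in H$ to kill $\sigma^2$, and in general one checks $\sigma^2$ can be normalized to the identity.

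For the second part, assume now the Galois group $G = \mathrm{Gal}(C/E)$ is cyclic of order $n$, generated by $\tau$, and let $\sigma$ be a lift of $\iota$ with $\sigma^2 = \mathrm{id}$ as just constructed. The group $\Gamma := \langle G, \sigma \rangle \subseteq \Aut(C)$ normalizes $G$ (since $\sigma G \sigma^{-1}$ consists of automorphisms over $\iota \circ (\text{elements of } G/\iota)$... more precisely $\sigma \tau \sigma^{-1}$ lies over $\iota \,\mathrm{id}_E\, \iota^{-1} = \mathrm{id}_E$, hence $\sigma \tau \sigma^{-1} \in G$), so $\Gamma$ is an extension $1 \to G \to \Gamma \to \langle \bar\sigma \rangle \to 1$ with $\langle \bar\sigma\rangle$ of order $2$, giving $|\Gamma| = 2n$. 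It remains to identify the conjugation action: $\sigma \tau \sigma^{-1} = \tau^k$ for some $k$ with $k^2 \equiv 1 \pmod n$. The claim is $k \equiv -1$. This is where the hyperelliptic hypothesis re-enters: the action of $\Gamma$ on $\Pic^0(C)$ restricted to the pullback $\pi^* \Pic^0(E)$, or better, the description of $G$ as embedded via $H$ in $\Pic^0(E)$ on which $\iota$ (hence $\sigma$) acts by $-1$ — concretely, the deck transformation $\tau$ corresponds under the standard correspondence to a chosen generator of $H$, and conjugating by $\sigma$ twists the classifying data by $\iota^* = -1$, sending the generator of $H$ to its inverse, i.e. $\sigma \tau \sigma^{-1} = \tau^{-1}$. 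Hence $\Gamma = \langle \tau, \sigma \mid \tau^n = \sigma^2 = 1,\ \sigma\tau\sigma^{-1} = \tau^{-1}\rangle \cong D_n$.

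The main obstacle, and the only genuinely substantive point, is the normalization $\sigma^2 = \mathrm{id}$ together with the computation $\sigma\tau\sigma^{-1} = \tau^{-1}$; both come down to carefully tracking the classifying-cocycle data of the abelian cover under $\iota^*$ and using that $\iota^*$ acts as $-1$ on $\Pic^0(E)$, and in the odd-order case using that $2$ is invertible mod $n$ to split off any residual $2$-torsion ambiguity in choosing the lift. For $n$ an odd prime this is clean; for general cyclic $n$ one must be slightly more careful but the same argument works since the only obstruction to $\sigma^2 = \mathrm{id}$ is an element of $G[2]$, which one can absorb. I would present the $\iota^* = -1$ computation first as a lemma (or cite \cite{Ac}, \cite{Far}, \cite{R} for it), then run the two extension-group arguments above. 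I expect the write-up to be short, as all the geometric content is in the classical hyperelliptic fact and the rest is elementary group theory of cyclic extensions.
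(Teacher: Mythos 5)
The paper itself offers no proof of this Proposition: it is recalled as a known result with references to \cite{Ac}, \cite{Far} and \cite{R}, so your argument has to stand on its own. Its core is the standard route and is correct as far as it goes: the cover is classified by a subgroup $H'\subseteq\Pic^0(E)$ of torsion points, the hyperelliptic involution acts as $-1$ on $\Pic^0(E)$ (and on $H_1(E,\mathbb{Z})$), hence $\iota^*H'=H'$ and $\iota$ lifts; and conjugation by any lift $\sigma$ acts on the deck group through $\iota_*=-1$, hence by inversion, which gives $\sigma\tau\sigma^{-1}=\tau^{-1}$.

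The gap is in the normalization $\sigma^2=\mathrm{id}$. Once conjugation by $\sigma$ inverts the Galois group $G$, replacing $\sigma$ by $\sigma h$ does not change its square at all: $(\sigma h)^2=\sigma^2(\sigma^{-1}h\sigma)h=\sigma^2h^{-1}h=\sigma^2$. So there is nothing to ``kill'': all lifts have one and the same square, and that square lies in $G[2]$ because $\sigma^2\in G$ is fixed under conjugation by $\sigma$ and therefore equals its own inverse. For $n$ odd (the only case the paper uses, $n=p$ an odd prime) this immediately forces $\sigma^2=\mathrm{id}$ for \emph{every} lift, so your conclusion is right but obtained by the wrong mechanism, and the repair is one line --- note also that you should establish the inversion action before, not after, this step, otherwise the argument is circular in its stated order. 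For even $n$, and for general abelian covers (which the first sentence of the Proposition includes), your claim that the residual element of $G[2]$ ``can be absorbed'' in the choice of lift is exactly the step that fails, since the choice of lift does not move $\sigma^2$; triviality of this common square needs genuine geometric input --- for instance Accola's analysis of the fibres over the Weierstrass points, the fixed points of $\iota$, or explicit equations of the abelian cover --- and at that point you should either supply such an argument or simply cite \cite{Ac}, \cite{Far} as the paper does.
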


In the cyclic case we consider the following commutative diagram:
\begin{equation}
\label{eq:dihedral}
\xymatrix{
C \ar[r]^{p:1}_f \ar[d]_{2:1}^{\rho} & E \ar[d]^{2:1} \\
D \ar[r]_{p:1} & \bP^1 \\
} 
\end{equation}
where $E$ is an hyperelliptic curve of genus~$g$, $E\to\bP^1$ is the hyperelliptic quotient and $f : C \to E$ is \'etale and abelian with cyclic Galois group $H$ of odd order~$p$. By the previous Proposition, $C \to \bP^1$ is Galois with Galois group $G = D_p$. 

Then $\rho: C\to D$ is the quotient by a lift of the hyperelliptic involution and $D\to \bP^1$ is a non-Galois ramified cover with dihedral monodromy.

This can be realized as follows: fix an hyperelliptic curve~$E$ of genus~$g$ and a cyclic subgroup $H'$ of order~$p$ of $\Pic^0(E)$. This gives the cover $C\to E$. Note that $g(C) = g_C = p(g-1) + 1$.

Now let $j \in D_p \subseteq \Aut(C)$ be a lift of the hyperelliptic involution. Then $j$ has $2g + 2$ fixed points, one over each Weierstrass point of~$E$ (which are the ramification points of the double cover $E\to \bP^1$). We use here the fact that the order~$p$ is odd.

Hence the genus of $D$ is $g_D = (p-1)(g-1)/2$. 

The ramification of the cover $D\to \bP^1$ is: over every branch point of the hyperelliptic covering there are $1 + (p-1)/2$ points. One of these points is non-ramified, the others have ramification index~$1$.

Conversely, starting with $D\to \bP^1$ with the above ramification and dihedral monodromy, its Galois closure is $C\to D \to \bP^1$.

Associated to the \'etale cover $f : C \to E$ there is a Prym variety $P(C, E)$ defined as the connected component of the identity of the kernel 
of the map $f_* : J(C) \to J(E)$ and $J(C)$ is isogenous to the product $P(C, E) \times J(E)$. The main theorem of \cite{R} identifies precisely the Prym variety:

\begin{theorem}[\cite{R}, Theorem 1]
\label{Ries}
There is an isomorphism of abelian varieties
\[
P(C, E) \cong J(D) \times J(D).
\]
Morever, if $h$ is a generator of the cyclic subgroup $H \subseteq \Aut(C)$, then the endomorphism
$\eta = h^* + (h^{-1})^*$ of $J(C)$ induces a nontrivial automorphism of $J(D)$ for $p > 3$. 
\end{theorem}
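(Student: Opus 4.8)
The plan is to translate the statement into the representation theory of $G=D_p$ acting on the rational Hodge structure $H^1(C,\bQ)$, using the quotient curves in diagram~$(\ref{eq:dihedral})$ to pin down the relevant sub-Hodge structures. For $p$ odd the irreducible $\bQ$-representations of $D_p$ are the trivial one $\mathbf 1$, the sign representation $\epsilon$ (on which the rotation subgroup $H$ acts trivially and a reflection acts by $-1$), and the $(p-1)/2$ two-dimensional representations $W_1,\dots,W_{(p-1)/2}$; since these $W_k$ form a single $\mathrm{Gal}(\bQ(\zeta_p)/\bQ)$-orbit, they occur with the same multiplicity in any $\bQ[G]$-module. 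In $H^1(C,\bQ)$ the $\mathbf 1$-part is $H^1(\bP^1,\bQ)=0$; the $H$-invariants are $H^1(E,\bQ)$, on which $G/H=\bZ/2$ acts with no invariants (because $C/G=\bP^1$), so $H^1(E,\bQ)$ is precisely the $\epsilon$-isotypic part; comparing dimensions with $g_C=p(g-1)+1$ one finds that each $W_k$ occurs with multiplicity $2(g-1)$.

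Next I identify $P(C,E)$. Since $f\colon C\to E$ is \'etale Galois with group $H$, one has $f^*f_*=\sum_{i=0}^{p-1}(h^i)^*$ on $J(C)$, which is multiplication by $p$ on the $\epsilon$-part $f^*J(E)$ and is $0$ on every $W_k$-part; hence $\ker(f_*\colon H^1(C,\bQ)\to H^1(E,\bQ))$ is the sum of the $W_k$-isotypic parts, i.e.\ it is $H^1(P(C,E),\bQ)$. Now let $\sigma$ be a lift of the hyperelliptic involution that is itself an involution (the proposition above), and put $D=C/\langle\sigma\rangle$ and $D'=C/\langle h\sigma\rangle$; as $p$ is odd, all reflections of $D_p$ are conjugate, so $D'\cong D$. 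Pulling back, $\rho^*H^1(D,\bQ)=H^1(C,\bQ)^{\langle\sigma\rangle}$ equals (the $\mathbf 1$-part being $0$) the $\sigma$-fixed line of each $W_k$ tensored with its multiplicity space, and likewise for $\rho'$; and in the standard two-dimensional $W_k$ the fixed lines of the two distinct reflections $\sigma$ and $h\sigma$ span $W_k$. Therefore $H^1(P(C,E),\bQ)=\rho^*H^1(D,\bQ)\oplus(\rho')^*H^1(D',\bQ)$ as rational Hodge structures, which already yields an isogeny $P(C,E)\sim J(D)\times J(D')\cong J(D)\times J(D)$.

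To upgrade this to an isomorphism, consider $\phi=\rho^*\times(\rho')^*\colon J(D)\times J(D')\to P(C,E)$: by the previous step $\phi$ is surjective and the two sides have equal dimension ($2g_D=(p-1)(g-1)=g_C-g$), so $\phi$ is an isomorphism if and only if it is injective. Because the double covers $C\to D$ and $C\to D'$ are \emph{branched} (over the $2g+2$ points lying above the Weierstrass points of $E$), the pullbacks $\rho^*$ and $(\rho')^*$ are injective on Jacobians, so $\ker\phi$ is isomorphic to the finite group $\rho^*J(D)\cap(\rho')^*J(D')$. Proving that this intersection is trivial — equivalently, that the two subtori, already in general position over $\bQ$, are in general position over $\bZ$ — is the step I expect to be the main obstacle. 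I would settle it with a polarization count: compute the type of the polarization induced by $\Theta_C$ on $P(C,E)$ and the type of its pullback under $\phi$ on $J(D)\times J(D')$, and check that the degrees agree, which forces $\deg\phi=1$. This is in essence the explicit computation carried out in~\cite{R}.

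Finally, for the ``moreover'' statement: on $P(C,E)$ the operator $h^*$ satisfies $1+h^*+\dots+(h^*)^{p-1}=0$ by the identity above, so $\bZ[\zeta_p]$ acts on $P(C,E)$ via $\zeta_p\mapsto h^*$ and $\eta=h^*+(h^{-1})^*$ acts as $\zeta_p+\zeta_p^{-1}$, a generator of the maximal real subfield $\bQ(\zeta_p)^+$, of degree $(p-1)/2$. Here $\zeta_p+\zeta_p^{-1}$ is a \emph{unit} of $\bZ[\zeta_p]^+=\bZ[\zeta_p+\zeta_p^{-1}]$, since $\zeta_p+\zeta_p^{-1}=\zeta_p^{-1}(1+\zeta_p^2)$ and $N_{\bQ(\zeta_p)/\bQ}(1+\zeta_p)=\Phi_p(-1)=1$; hence $\eta$ acts on $P(C,E)$ as an automorphism. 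From $\sigma h\sigma=h^{-1}$ we get $\sigma^*\eta\sigma^*=\eta$, so $\eta$ preserves the connected fixed part of $\sigma^*$ in $P(C,E)$, which (using $\mathrm{tr}(\sigma^*|_{H^1(P)})=0$) has dimension $g_D$ and hence equals $\rho^*J(D)$. Thus $\eta$ induces an automorphism of $J(D)$, and for $p>3$ it is not $\pm\mathrm{id}$ because $\zeta_p+\zeta_p^{-1}\notin\bZ$, i.e.\ it is nontrivial. (For $p=3$ one gets $\eta=-\mathrm{id}$ on $P(C,E)$, which is why that case is excluded.)
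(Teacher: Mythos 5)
The paper does not prove this statement at all: it is imported verbatim from \cite{R} (Theorem 1), so there is no internal argument to compare yours with, and the only question is whether your reconstruction stands on its own. Most of it does. The decomposition of $H^1(C,\bQ)$ under $D_p$, the identification of $H^1(P(C,E),\bQ)$ with the sum of the two-dimensional isotypic pieces, the resulting isogeny $P(C,E)\sim J(D)\times J(D')$ with $D'\cong D$, and the unit argument showing that $\eta$ acts invertibly on $P(C,E)$ are all correct. The very last claim, that $\eta$ restricted to $\rho^*J(D)$ is nontrivial, is justified a bit too quickly (``because $\zeta_p+\zeta_p^{-1}\notin\bZ$'' refers to the action on $P$, not on the subvariety), but the patch is one line: $\eta$ is semisimple on $H^1(P,\bQ)$ with minimal polynomial the irreducible minimal polynomial of $\zeta_p+\zeta_p^{-1}$, so its restriction to the nonzero $\eta$-stable subvariety $\rho^*J(D)$ has the same minimal polynomial, of degree $(p-1)/2>1$ for $p>3$, hence is not multiplication by an integer.

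The genuine gap is exactly where you flag it: you never prove $\rho^*J(D)\cap(\rho')^*J(D')=0$, i.e.\ $\deg\phi=1$, and this is precisely the content of Ries's theorem beyond the classical isogeny statement. The gap is not a formality that the Hodge-theoretic argument can see: if $x=\rho^*a=(\rho')^*b$, then $x$ is fixed by $\sigma^*$ and by $(h\sigma)^*$, hence by $h^*$, and since $1+h^*+\cdots+(h^*)^{p-1}=0$ on $P(C,E)$ it is killed by $p$; so $\ker\phi$ is a subgroup of the $p$-torsion, and ruling out nonzero such points is an arithmetic statement about how $J(D)[p]$ and $J(D')[p]$ sit inside $J(C)$, invisible at the level of $\bQ$-Hodge structures. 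The proposed ``polarization count'' is not carried out and is itself nontrivial: one needs the type of $\theta_P=\Theta_C|_{P}$ for a cyclic \'etale $p$-cover and the full pullback $\phi^*\theta_P$ on $J(D)\times J(D')$, including the cross term $\rho_*\circ\lambda_{\Theta_C}\circ(\rho')^*$, before any degree comparison can force $\deg\phi=1$. As written, your proposal proves the isogeny (which is what Corollary~\ref{isog} and Lemma~\ref{qrel} actually use) and the ``moreover'' clause, but for the isomorphism itself --- which the paper does invoke in Remark~\ref{psiPrym} --- it reduces to the same appeal to \cite{R} that the paper makes.
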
 

\begin{corollary} \ 
\label{isog}
\begin{enumerate}
\item $J(C)$ is isogenous to $J(D) \times J(D) \times J(E)$.
\item For $p > 3$ the curve $D$ is special in moduli since its Jacobian has non trivial automorphisms.
\end{enumerate}
\end{corollary}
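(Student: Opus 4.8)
The plan is to deduce both statements formally from Theorem~\ref{Ries} and the structure of Prym varieties recalled above, so that essentially no new computation is needed.

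For part~(1): as recalled before Theorem~\ref{Ries}, $J(C)$ is isogenous to $P(C,E) \times J(E)$; combining this with the isomorphism $P(C,E) \cong J(D) \times J(D)$ of Theorem~\ref{Ries} gives $J(C) \sim J(D) \times J(D) \times J(E)$. The dimensions are consistent: $2 g_D + g = (p-1)(g-1) + g = p(g-1) + 1 = g_C$.

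For part~(2): by the second assertion of Theorem~\ref{Ries}, for $p > 3$ the endomorphism $\eta = h^* + (h^{-1})^*$ of $J(C)$ preserves the factor $J(D) \subseteq P(C,E)$ and induces a nontrivial automorphism of it. One has to check that ``nontrivial'' really means ``different from $\pm\mathrm{id}$'', since $-\mathrm{id}$ exists on every abelian variety and carries no information. This comes from an eigenvalue count: on $P(C,E)$ the operator $h^*$ acts with eigenvalues the nontrivial $p$-th roots of unity $\zeta^k$ ($1 \le k \le p-1$), so $\eta$ acts with eigenvalues $\zeta^k + \zeta^{-k} = 2\cos(2\pi k/p)$ ($1 \le k \le (p-1)/2$); for $p > 3$ these are irrational, hence $\eta$ cannot act as an integer multiple of the identity on any nonzero abelian subvariety of $J(C)$, in particular not on $J(D)$. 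Therefore $\End(J(D))$ strictly contains $\bZ$. Since a very general curve $X$ of genus $g_D$ satisfies $\End(J(X)) = \bZ$, and so has only $\pm\mathrm{id}$ as automorphisms of its Jacobian, the curve $D$ lies in a proper subvariety of $\cM_{g_D}$; that is, $D$ is special in moduli.

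I expect the one delicate point to be the step just described: confirming that the automorphism of $J(D)$ supplied by Theorem~\ref{Ries} is genuinely $\neq \pm\mathrm{id}$. This is precisely where the hypothesis $p > 3$ enters: for $p = 3$ one has $\zeta + \zeta^{-1} = -1$ for both primitive cube roots of unity, so $\eta$ restricts to $-\mathrm{id}$ on $J(D)$ and gives no information on the position of $D$ in moduli. With that point settled, part~(2) is immediate and part~(1) is purely formal.
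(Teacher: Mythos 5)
Your proposal is correct and follows essentially the same route as the paper, which states the corollary without proof as an immediate consequence of Theorem~\ref{Ries} together with the isogeny $J(C) \sim P(C,E) \times J(E)$ recalled just before it. Your extra eigenvalue check that $\eta$ is not $\pm\mathrm{id}$ for $p>3$ matches the paper's remark right after the corollary that $\End(J(D)) \otimes \bQ$ contains $\bQ(\eta)$, the maximal real subfield of $\bQ(\zeta_p)$, so it is a welcome elaboration rather than a departure.
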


Hence $\End(J(D))\otimes \bQ$ contains at least $\bQ(\eta)$ which is isomorphic to the maximal real subfield 
of the cyclotomic field $\bQ(\zeta)$ with $\zeta^p = 1$. For more results on endomorphism of Jacobians see \cite{E}.

\medskip
Let $\cH_{g,p}$ be the moduli space of unramified cyclic covers of degree~$p$ of hyperelliptic curves of genus~$g$. A point in $\cH_{g,p}$ is (up to isomorphism) a pair $(E, H')$ where $E$ is an hyperelliptic curve of genus~$g$ and $H'$ is a cyclic subgroup of order~$p$ of $\Pic^0(E)$. The dihedral construction of  diagram~$(\ref{eq:dihedral})$ determines uniquely the isomorphism class of $D$, since any two lifts of the hyperelliptic involution are conjugated in~$\Aut(C)$ and hence gives a morphism
\begin{equation}
\label{psi}
\psi : \cH_{g,p} \to \cM_{(p - 1)(g - 1)/2}
\end{equation}
from the moduli space $\cH_{g,p}$  to the moduli space $\cM_{g_D}$ of curves of genus $g_D = (p - 1)(g - 1)/2$.

 The image of $\psi$ is clearly contained in the locus of $p$-gonal curves. When $p = 3$  the closure of the image is the trigonal locus since for $D \to \bP^1$ a map of degree~$3$ with simple ramifications, the monodromy is the full symmetric group~$\cS_3 = D_3$ and hence $D$ is in the image of the map~$\psi$. These curves form an open subset of the trigonal locus which is irreducible.

 We study now the fibers of this map and for this we analyze the correspondence associated to the endomorphism~$\eta$ of $J(C)$.

Recall that $h \in H \subseteq D_n$ is a generator of the cyclic subgroup $H$ and $j$ is a lift of the hyperelliptic
involution. Let $j_1 = hj$ and note that $j_1$ is again an involution. Let 
$\gamma: C \to D\times D$ be defined by 
\[ 
\gamma(x) = (\rho_j(x),\rho_{j_1}(x)).
\]  
where $\rho_j$, $\rho_{j_1}$ are the quotient maps associated to the involutions. Note that $\rho_{j_1} = \rho_j \circ h^{(p-1)/2}$.

We have:
\begin{lemma}
\label{etale}  
The map $\gamma$ is an embedding.
\end{lemma}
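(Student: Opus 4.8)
The plan is to show $\gamma$ is injective on points and injective on tangent vectors (i.e.\ an immersion), and then conclude it is a closed embedding since $C$ is a projective curve. Throughout I will use the dihedral group $G = D_p = \langle h, j\rangle$ acting on $C$, with $\rho = \rho_j$ the quotient by $\langle j\rangle$ and $\rho_{j_1}$ the quotient by $\langle j_1 = hj\rangle$, noting $\rho_{j_1} = \rho_j \circ h^{(p-1)/2}$ as stated, and that $D = C/\langle j \rangle \cong C/\langle j_1\rangle$ since $j$ and $j_1$ are conjugate in $G$.

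For injectivity on points: suppose $\gamma(x) = \gamma(y)$, i.e.\ $\rho_j(x) = \rho_j(y)$ and $\rho_{j_1}(x) = \rho_{j_1}(y)$. The first equation means $y \in \{x, jx\}$ and the second means $y \in \{x, j_1 x\}$. If $y \neq x$ then $y = jx = j_1 x$, which forces $jx = hjx$, i.e.\ $h$ fixes the point $jx$. But $h$ generates the Galois group of the \'etale cover $f\colon C \to E$, so $h$ acts freely on $C$; hence $h$ has no fixed points and we get a contradiction. Therefore $y = x$ and $\gamma$ is injective.

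For the immersion property: $\gamma$ fails to be an immersion exactly at a point $x$ where $d\rho_j$ and $d\rho_{j_1}$ both vanish, i.e.\ where $x$ is a ramification point of \emph{both} quotient maps $C \to C/\langle j\rangle$ and $C \to C/\langle j_1\rangle$. A ramification point of $\rho_j$ is a fixed point of $j$, and a ramification point of $\rho_{j_1}$ is a fixed point of $j_1 = hj$. So I must rule out a common fixed point of $j$ and $hj$. If $jx = x$ and $hjx = x$, then $hx = x$, again contradicting the freeness of $h$. Hence no point of $C$ is ramified for both maps, so at every point at least one of $d\rho_j, d\rho_{j_1}$ is an isomorphism on tangent spaces and $d\gamma_x$ is injective. (The conceptual point is that the fixed loci of the $p$ conjugate involutions $h^k j h^{-k}$ are pairwise disjoint, precisely because their pairwise products are nontrivial powers of the fixed-point-free element $h$; this is where oddness of $p$ is implicitly used, since then all lifts of the hyperelliptic involution are conjugate and each has $2g+2$ fixed points mapping bijectively to the Weierstrass points of $E$.)

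Combining the two, $\gamma$ is an injective immersion from a smooth projective curve into the smooth projective surface $D \times D$; since a proper injective immersion of a projective variety is a closed embedding, $\gamma$ is an embedding. I expect the only real content — the "hard part," such as it is — to be correctly identifying the potential failure loci (double points come from a shared non-identity element of $G$ in the two coset relations; ramification comes from shared fixed points) and then observing that in both cases the obstruction is a fixed point of $h$, which cannot exist. Everything else is formal.
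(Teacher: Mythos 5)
Your proof is correct and follows essentially the same route as the paper: failure of injectivity forces $x' = j(x) = j_1(x)$ and failure of the differential forces $j(x) = j_1(x) = x$, and in either case one manufactures a fixed point of $h$, contradicting that $f$ is \'etale. The only addition is the explicit (and standard) final remark that an injective immersion of a projective curve is a closed embedding, which the paper leaves implicit.
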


 \begin{proof}
 If $x$ is not a fixed point for $j$ it follows that the map $\rho_j(x)$ is smooth at $x,$ that is the differential $d\rho_j$ is injective at $x,$ a fortiori $d\gamma(x)$ is injective. Therefore $d\gamma(x) $ can fail to be injective only if $j(x) = j_1(x) = x$ and this implies $h(x) = x$. But since $f$ is \'etale, $h$ does not have fixed points.
 
 In a similar way we see that $\gamma$ is injective. Assume  by contradiction that $\gamma(x)=\gamma(x'),$ but $x\neq x'$. Then $j(x) = j_1(x) = x'$ and $h(x') = hj(x) = j_1(x) = x'$ and as before $h$ would have a fixed point.
  \end{proof}

\begin{remark}
\label{r:surj}
 The proof of Theorem~$\ref{Ries}$ shows that the induced map $\gamma_* : J(C) \to J(D)\times J(D)$ is surjective. We will need this remark in Lemma~$\ref{qrel}$.
\end{remark}  

Let $(E, H')\in \cH_{g,p}$, let $[D] = \psi(E,H')$ and let $X$ be an irreducible component of the fiber~$\psi^{-1}([D])$. The discussion above shows that there is a morphism 
\begin{equation}
\label{HilbSch}
\alpha : X \to \Hilb(D \times D)
\end{equation}
from $X$ to a suitable Hilbert scheme of $D \times D$ given as follows: to a point $(E, H') \in X$ we associate the subscheme $\gamma(C)$ of $D \times D$.

We now compute the self-intersection of $\gamma(C)$ inside the surface $D \times D$. 
\begin{lemma}
\label{C2} 
\[
\gamma(C)^2 = 8 - 2(g - 1)(p - 2)
\]
\end{lemma}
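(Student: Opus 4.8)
The plan is to compute $\gamma(C)^2$ by pulling back to $C$ via $\gamma$ and using the projection formula, reducing everything to intersection numbers of the two classes $A = \rho_j^*(\text{pt})$ and $A' = \rho_{j_1}^*(\text{pt})$ pulled back to $C$, together with the canonical class. First I would write $\gamma(C)$ as a divisor class on $D\times D$ in terms of the two fiber classes $F_1 = \{pt\}\times D$ and $F_2 = D\times\{pt\}$: since $\gamma$ has degree $1$ onto its image, $\gamma_*[C] = a F_1 + b F_2$ where $a = \gamma(C)\cdot F_2 = \deg(\rho_{j_1}) = 2$ and similarly $b = \gamma(C)\cdot F_1 = \deg(\rho_j) = 2$. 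Hence $\gamma(C) \sim 2F_1 + 2F_2$ up to classes that are numerically trivial on $D\times D$ modulo the diagonal-type corrections — but one must be careful: $\mathrm{NS}(D\times D)$ is generally bigger than $\mathbb{Z}F_1\oplus\mathbb{Z}F_2$, so I would instead compute $\gamma(C)^2$ directly via adjunction on the smooth curve $\gamma(C)\cong C$.

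Concretely, by the adjunction formula on $S = D\times D$,
\[
\gamma(C)^2 = 2g_C - 2 - \deg\!\big(K_S|_{\gamma(C)}\big) = 2g_C - 2 - \gamma(C)\cdot K_{D\times D}.
\]
Since $K_{D\times D} = K_D\boxtimes\cO_D + \cO_D\boxtimes K_D$, pulling back along $\gamma$ gives $\gamma^*K_{D\times D} = \rho_j^*K_D + \rho_{j_1}^*K_D$, and by Riemann–Hurwitz for each of the degree-$2$ maps $\rho_j,\rho_{j_1}\colon C\to D$ (each branched at the $2g+2$ fixed points of the respective involution, as recorded in the excerpt),
\[
\deg \rho_j^* K_D = 2(2g_D - 2), \qquad \deg\!\big(\gamma(C)\cdot K_{D\times D}\big) = 4(2g_D-2) + (\text{ramification terms}).
\]
Actually the cleanest route is $\gamma(C)\cdot K_{D\times D} = \deg\gamma^*K_{D\times D} = \deg(\rho_j^*K_D) + \deg(\rho_{j_1}^*K_D)$, and then use $K_C = \rho_j^*K_D + R_j$ with $R_j$ the ramification divisor of $\rho_j$ of degree $2g+2$, so $\deg\rho_j^*K_D = (2g_C-2) - (2g+2)$, and the same for $\rho_{j_1}$. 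Substituting $g_C = p(g-1)+1$ and $g_D = (p-1)(g-1)/2$ and simplifying should yield $\gamma(C)^2 = 8 - 2(g-1)(p-2)$.

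The main obstacle I anticipate is bookkeeping the ramification correctly: I must use the description from the excerpt that $j$ (and likewise $j_1 = hj$) has exactly $2g+2$ fixed points on $C$, one over each Weierstrass point of $E$, which relies on $p$ being odd. A secondary subtlety is making sure $\gamma$ being an embedding (Lemma~\ref{etale}) legitimately lets me identify $\gamma(C)^2$ with the self-intersection computed via adjunction on the smooth curve $\gamma(C)$, i.e. that $g_a(\gamma(C)) = g_C$; this is immediate from Lemma~\ref{etale}. Once the fixed-point count and the two Riemann–Hurwitz computations are in hand, the final identity is a routine substitution, so I would present the fixed-point/ramification step with care and leave the arithmetic compressed.
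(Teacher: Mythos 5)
Your proposal is correct and follows essentially the same route as the paper: adjunction on $D\times D$ together with $K_{D\times D}=K_D\boxtimes\cO_D+\cO_D\boxtimes K_D$, the degree-$2$ maps $\rho_j,\rho_{j_1}$ giving $\gamma(C)\cdot K_{D\times D}=2(2g_D-2)+2(2g_D-2)$ (your Riemann--Hurwitz detour with the $2g+2$ fixed points yields the same number, just with extra bookkeeping that could be skipped since $\deg\rho_j^*K_D=2\deg K_D$), and the arithmetic indeed produces $8-2(g-1)(p-2)$.
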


\begin{proof}
Use the genus formula for $\gamma(C)$ inside $D \times D$, the formula for the canonical bundle of the product of two curves and the fact that $\gamma(C) \cdot D' = 2$, where $D' = D \times \{P\}$ since the degree of the map $\rho_j : C \to D$ is $2$.
\end{proof}

A similar computation appears in \cite{E}, Proposition~$4.1$ where the self intersection is expressed in terms of characters 
of the dihedral group.

\begin{proposition} 
\label{finitefibers}
The map $\psi$ has finite fibers if and only if $p \ge 7$, $p = 5$ and $g \ge 3$ or $p = 3$ and $g \ge 5$
\end{proposition}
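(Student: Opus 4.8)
The plan is to play $\dim\cH_{g,p}=2g-1$ against the dimension of the Hilbert scheme of $D\times D$ near $[\gamma(C)]$, using Lemma~\ref{C2}; the trichotomy should turn out to be exactly the sign of $\gamma(C)^2=8-2(g-1)(p-2)$. First I would record that the forgetful map $\cH_{g,p}\to\cH_g$ onto the hyperelliptic locus is finite of constant degree $(p^{2g}-1)/(p-1)$, so $\dim\cH_{g,p}=\dim\cH_g=2g-1$. Then, fixing $[D]=\psi(E,H')$, an irreducible component $X$ of $\psi^{-1}([D])$, and the morphism $\alpha:X\to\Hilb(D\times D)$ of~(\ref{HilbSch}), I would check that $\alpha$ has finite fibres: $\gamma(C)\cong C$ by Lemma~\ref{etale}, so a point of $\alpha^{-1}([\gamma(C)])$ determines $C$ up to isomorphism, and a fixed curve of genus $\ge2$ carries only finitely many $D_p$-actions, hence lies over finitely many pairs $(E,H')$. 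This yields $\dim X=\dim\alpha(X)\le\dim_{[\gamma(C)]}\Hilb(D\times D)\le h^0(C,N_{\gamma(C)/D\times D})$, where by Lemma~\ref{C2} the normal bundle is a line bundle of degree $\gamma(C)^2$ on $C$, a quantity depending only on $g$ and $p$.

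For the implication ``$\Leftarrow$'' I would argue as follows. A direct check shows $\gamma(C)^2<0$ precisely when $p\ge7$, or $p=5$ and $g\ge3$, or $p=3$ and $g\ge6$; in those cases $h^0(C,N_{\gamma(C)/D\times D})=0$, whence $\dim X=0$, and since $X$ is irreducible it is a single point, so $\psi$ has finite fibres. The borderline value $\gamma(C)^2=0$ is attained (for $g\ge2$ and $p$ an odd prime) only at $p=3$, $g=5$; there $g_D=4$ and $\dim\cH_{5,3}=9=\dim\cM_4$, and by the discussion after~(\ref{psi}) the closure of the image of $\psi$ is the trigonal locus, which in genus $4$ is all of $\cM_4$. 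Hence $\psi$ is dominant between varieties of equal dimension and so has finite general fibre.

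For the implication ``$\Rightarrow$'' the remaining cases are exactly those with $\gamma(C)^2>0$: $p=3$ with $g\in\{2,3,4\}$ and $p=5$ with $g=2$. When $p=3$ one has $g_D=g-1$, so $\dim\cM_{g_D}$ equals $1$, $3$, $6$ for $g=2$, $3$, $4$, while $\dim\cH_{g,3}=2g-1$ equals $3$, $5$, $7$; since $\dim\cH_{g,3}>\dim\cM_{g_D}$ in every case, every non-empty fibre of $\psi$ has positive dimension. The case $p=5$, $g=2$ is the only one with no shortcut: $\dim\cH_{2,5}=3=\dim\cM_2$, the image of $\psi$ lies in the locus of genus-$2$ curves admitting a degree-$5$ cover of $\bP^1$ with dihedral monodromy, which has dimension at most $3$, and $\gamma(C)^2=2>0$, so neither the dimension count nor the normal-bundle bound decides finiteness. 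Here I would compute the differential of the associated Prym map and show that $d\psi$ has a $1$-dimensional kernel at a general point, so that the fibres are exactly $1$-dimensional; this is the content of Section~\ref{g2p5}.

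The hard part is precisely this last computation. In the two equidimensional cases $p=3$, $g=5$ and $p=5$, $g=2$, finiteness cannot be decided by dimensions or by the sign of $\gamma(C)^2$; in the first, dominance onto the trigonal locus closes the argument, but in the second $\psi$ could a priori be generically finite onto $\cM_2$, and ruling that out amounts to showing that the codifferential $\rho_j^{*}H^0(D,2K_D)\to T^{*}_{[E]}\cH_g$ — which governs $d\psi$ and decomposes according to the $D_p$-action on $H^0(C,2K_C)$ — fails to be surjective by exactly one dimension. That Prym-type infinitesimal computation, carried out in Section~\ref{g2p5}, is the real obstacle.
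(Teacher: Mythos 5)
Your overall strategy (playing the sign of $\gamma(C)^2$ against dimension counts, with the Hilbert-scheme bound via $h^0(C,N_{\gamma(C)/D\times D})$ and the finiteness of $\alpha$) is essentially the paper's, and the finiteness direction in the cases $\gamma(C)^2<0$ is sound. But there are two gaps. The serious one is the case $p=5$, $g=2$: you do not actually prove that the fibers are positive dimensional there; you only announce that one ``would compute the differential of the Prym map'' and you yourself flag this as the unproved hard step. For this proposition no such computation is needed: by Theorem~\ref{Ries}, for $p>3$ the endomorphism $\eta$ induces a nontrivial automorphism of $J(D)$, so $D$ carries real multiplication and is not a general genus-$2$ curve; hence the (irreducible) image of $\psi$ lies in a proper subvariety of $\cM_2$ of dimension at most $2<3=\dim\cH_{2,5}$, and every fiber is positive dimensional. (The codifferential computation of Section~\ref{g2p5} is only needed later, to show the fibers have dimension exactly $1$.) Your alternative remark that the image lies in the locus of genus-$2$ curves admitting a degree-$5$ cover of $\bP^1$ with dihedral monodromy, ``of dimension at most $3$,'' is vacuous since $\dim\cM_2=3$; the relevant constraint you missed is precisely the extra endomorphism.

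The second, smaller, gap is the borderline case $p=3$, $g=5$, where $\gamma(C)^2=0$: dominance onto $\cM_4$ together with $\dim\cH_{5,3}=9=\dim\cM_4$ gives finiteness only of the \emph{general} fiber, and your normal-bundle bound only yields $h^0\le 1$ there, so a special fiber could a priori be positive dimensional, contrary to what the statement asserts. The paper closes this case by noting that a point of $\psi^{-1}([D])$ is recovered from a base-point-free $g^1_3$ on the genus-$4$ curve $D$ (the cover $C\to E$ being the Galois closure of the trigonal map, and $C\subset D\times D$ the graph of the trigonal correspondence), and $D$ has at most two $g^1_3$'s; hence every fiber is finite. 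Your argument should be completed along these lines.
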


\begin{proof}
If the map $\psi$ has positive dimensional fibers, then the image of $C$ inside $D \times D$ must move in an algebraic family. This implies $\gamma(C)^2 \ge 0$ and so we obtain all the cases in the statement except for $p = 3$ and $g = 5$.

In this case the curve $D$ is a trigonal curve of genus $4$ and $C$ is the graph in $D \times D$ of the trigonal correspondence. 
Since $D$ has only one or two $g^1_3$, the fiber is finite also in this case.

We show now that for $p$ and $g$ not in the given ranges the fibers are positive dimensional. Note that $\cH_{g,p}$ and $\cM_{(p - 1)(g - 1)/2}$ are irreducible.

When $p = 3$ and $g \le 4$ we have $\dim \cH_{g,3} > \dim \cM_{g-1}$ and so the fibers are positive dimensional.

The last case is $p=5$ and $g=2$. In this case $D$ has also genus~$2$ so $\dim \cH_{2,5} = \dim \cM_2 = 3$. Since $J(D)$ has non trivial endomorphisms by Theorem~\ref{Ries} the curve  $D$ is not a general curve of genus~$2$ and so the image of $\psi$ has dimension at most~$2$.
\end{proof}

\begin{remark}
\label{psiPrym}
Let $\cP : \cH_{g, p} \to \cA'_{(p-1)(g-1)}$ be the Prym map that to~$(E, H')$ associates $(P, \theta_P)$ where $P = P(C, E)$ is the Prym variety of the cover $C \to E$ determined by $H'$ and $\theta_P$ is the natural polarization induced by $J(C)$. Note that $\theta_P$ in general is not principal (see~\cite{R} for details). On the other hand, composing the map $\psi$ with the Torelli map~$t$ we obtain a map $T : \cH_{g, p} \to \cA_{(p-1)(g-1)}$ given by
$T(E, H') = J(D) \times J(D)$ with the product polarization.

By Theorem~$\ref{Ries}$ the abelian varieties $P$ and $J(D)\times J(D)$ are isomorphic. Since the Torelli map is injective and an abelian variety has at most a countable number of polarizations
 the fibers of $\psi$ have the same dimension as the fibers of the Prym map~$\cP$.
\end{remark}

We close this section noting that the above construction and Theorem~\ref{Ries} give families  of curves
of genus $(p-1)(g-1)/2$ whose Jacobians have a non trivial algebra of endomorphisms. When the fibers of $\psi$ are finite, these families have dimension $2g - 1$. We note that setting $g = 2$ and $p \ge 7$ we recover (at least in characteristic 0) part $(1)$ of the Main Theorem of \cite{E}. When $g=2$ and $p=5$ the family has dimension~$2$.

\section{Xiao fibrations}
\label{Xiaofibr}

Any subvariety of $\cM_g$ gives rise to some fibration whose general fibers are the genus $g$ curves belonging to the family (see~\cite{K} for a precise statement).
We consider here the subvarieties given by the positive dimensional fibers of the maps $\psi$ defined in~$(\ref{psi})$.
In this case the corresponding fibrations can be more easily constructed by using the universal family of appropriate Hilbert schemes. 

For $X$ an irreducible component of a fiber of $\psi$ we consider the morphism $\alpha : X \to \Hilb(D \times D)$ given above in~$(\ref{HilbSch})$. Let $Y$ be the irreducible component of~$\Hilb(D\times D)$ containing the image $\alpha(X)$ and, if necessary, consider its reduced structure. Let $\cC$ be the universal family over~$Y$.  Let $\overline{X}$ be a smooth completion of $X$. As the Hilbert scheme is projective, the morphism $\alpha$ extends to a rational map $\alpha : \overline{X} \dashrightarrow Y$ and after  blowing up, if necessary, we get a morphism $\alpha : B \to Y$. The pullback of the universal family over~$Y$ gives a fibration 
\begin{equation}
\label{e:fibration}
\pi : S_D \to B
\end{equation}
whose general fibers are curves~$C$ that are cyclic covers of the curves~$E$ in the fiber of $\psi$ over $[D]$
  of genus $g_C = p(g - 1) + 1$. 

\begin{lemma} 
\label{qrel}
For the general $D$ in the image of $\psi$ the relative irregularity of $S_D$ is $2g_D = (p-1)(g-1)$.
\end{lemma}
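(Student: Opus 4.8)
The goal is to compute the relative irregularity $q_\pi = q(S_D) - b$ of the fibration $\pi: S_D \to B$ constructed in \eqref{e:fibration}, for $D$ general in the image of $\psi$. The relative irregularity is computed as the dimension of the kernel of the pullback $\pi^*: H^{1,0}(S_D) \to H^0(B, \pi_*\Omega^1_{S_D/B})$, or equivalently (after accounting for the base $B$) as the dimension of the fixed part of the variation of Hodge structure on $H^{1,0}$ of the fibers $C$; concretely, $q_\pi$ is the dimension of the largest sub-Hodge-structure of $H^1(C,\mathbb Z)$ that stays constant as $C$ varies in the family. So the plan is: identify a fixed part of dimension at least $2g_D$, and then argue that nothing larger is fixed for general $D$.

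First I would establish the lower bound $q_\pi \geq 2g_D$. The family is, by construction, the family of images $\gamma(C) \subseteq D \times D$ as $(E,H')$ varies over $X \subseteq \psi^{-1}([D])$, with $D$ itself held fixed. Each such curve comes with an embedding $\gamma: C \hookrightarrow D \times D$ (Lemma \ref{etale}) and an induced pullback $\gamma^*: H^{1,0}(D\times D) \to H^{1,0}(C)$. By Remark \ref{r:surj} the dual map $\gamma_*: J(C) \to J(D) \times J(D)$ is surjective, so $\gamma^*$ is injective; its image is a subspace of $H^{1,0}(C)$ of dimension $\dim H^{1,0}(D\times D) = 2g_D$. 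The crucial point is that this subspace is independent of the point of $X$: it is the pullback of the fixed $2g_D$-dimensional space $H^0(D,\Omega^1_D) \oplus H^0(D,\Omega^1_D)$ along the two projections $D\times D \to D$. Translating this into the fibration language — these holomorphic forms on $D\times D$ restrict to relative forms on the total space of the universal family $\mathcal C \to Y$, and pull back via $\alpha: B \to Y$ to forms on $S_D$ that live in $H^{1,0}(S_D)$ and are not pulled back from $B$ — gives $q_\pi \geq 2g_D = (p-1)(g-1)$.

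Next, the upper bound $q_\pi \leq 2g_D$ for general $D$. Here one uses that, again by Remark \ref{r:surj}, $\gamma_*$ is surjective with kernel an abelian subvariety of $J(C)$ of dimension $g_C - 2g_D = p(g-1)+1 - (p-1)(g-1) = g$; by Corollary \ref{isog}(1) this complement is isogenous to $J(E)$, where now $E$ does vary in the family. So the fixed part of the Hodge structure on $H^1(C)$ is $2g_D$ plus the dimension of the fixed part of the VHS carried by the $J(E)$-factor. For general $D$ in the image of $\psi$, the family of hyperelliptic curves $E$ over $X$ is non-isotrivial (if all the $E$'s were isomorphic the fiber would not move, contradicting that $X$ is positive-dimensional and that distinct $(E,H')$ give distinct subschemes $\gamma(C)$ — one must check this last genericity point, appealing as in Remark \ref{psiPrym} to the finiteness of the number of subgroups $H'$), and a non-isotrivial family of curves has no fixed part in its $H^{1,0}$ beyond what comes from sub-Hodge-structures; since $J(E)$ is $\mathbb Q$-simple or at least carries no extra fixed factor for general $E$ in a hyperelliptic family, its contribution to $q_\pi$ is zero. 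Hence $q_\pi = 2g_D$.

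The main obstacle is the upper bound, specifically controlling the fixed part coming from the $J(E)$-complement. One has to know that the family $\{E\}$ over the component $X$ really does vary with nontrivial period map — i.e. that $X$ does not parametrize a family in which $E$ is (isogenous to something) constant while only $H'$ or some hidden modulus moves. This is where the hypothesis "general $D$ in the image of $\psi$" and the identification of fibers of $\psi$ with fibers of the Prym map $\mathcal P$ (Remark \ref{psiPrym}) do the real work: a positive-dimensional fiber of $\mathcal P$ over a general point of the image cannot have isotrivial $E$, essentially because fixing $J(E)$ together with the polarization data would pin down $(E,H')$ up to finite ambiguity. I would make this precise by a dimension/monodromy argument on the universal family over $X$, or alternatively by directly computing that the Hodge-theoretic fixed part of $R^1\pi_*\mathbb Q$ has rank exactly $4g_D$. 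Once that is in hand, the lemma follows.
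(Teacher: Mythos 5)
Your proposal is correct and follows essentially the same route as the paper: the lower bound comes from the constant $J(D)\times J(D)$ part via the surjectivity of $\gamma_*$ (Remark~\ref{r:surj}), and the upper bound from the isogeny $J(C)\sim J(D)\times J(D)\times J(E)$ together with the non-isotriviality of the family of curves $E$ (finiteness of the possible $H'$ for fixed $E$) and the simplicity/indecomposability of $J(E)$ for a general member. The only difference is cosmetic: you phrase the argument via the fixed part of the variation of Hodge structure, while the paper works with the Albanese image $K=\ker(\Alb(S)\to J(B))^0$ and the constancy of the image of $J(E_t)$ in $K$.
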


\begin{proof} Let $\pi_* : \Alb(S) \to J(B)$ be the map from the Albanese variety of~$S$ to~$J(B)$ induced by $\pi$ and let $K$ be the connected component of the identity of the kernel of~$\pi_*$. By definition, the relative irregularity~$q_\pi$  is the dimension of~$K$.

Let $C_t = \pi^{-1}(t)$ for $t \in B$ and let $E_t$ be the corresponding hyperelliptic curve. Since the family $\{E_t\}$ is not constant in moduli, also $\{C_t\}$ has varying moduli. 

The composition $J(C_t) \to \Alb(S) \to J(B)$ is trivial since $C_t$ is a fiber of $\pi$ and hence the image of $J(C_t)$ is contained in $K$ and as in \cite{P}, (0.5), one has that the image of~$J(C_t)$ is in fact equal to~$K$.

The embeddings $\gamma_t: C_t \to D\times D$ induce a map $S \to D\times D$ which is surjective since the curves $C_t$ do not have not constant moduli and hence a surjective map $\Alb(S) \to \Alb(D\times D) = J(D)\times J(D)$. Moreover $(\gamma_t)_*$ factors through $\Alb(S)$. By Remark~\ref{r:surj} the map $(\gamma_t)_*$ is surjective and hence the restriction to $K \to J(D)\times J(D)$ is surjective. This shows $q_\pi \ge 2g_D$.

Recall now that $J(C_t)$ is isogenous to $J(D)\times J(D) \times J(E_t)$ (Corollary~$\ref{isog}$) and so there is a surjective map $J(D)\times J(D) \times J(E_t) \to K$. The image of $J(E_t)$ is constant in $K$. If at least one curve $E_t$ in the family has indecomposable Jacobian, then this image is ~$0$ and so the relative irregularity is exactly $2g_D$. 
\end{proof}

By Proposition~$\ref{finitefibers}$ there are $4$~cases in which we obtain a positive dimensional $B$. When $B$ is a curve, the fibration $S_D$ is a surface and we may ask if it is a Xiao fibration. This cannot happen for $p=3$, $g=2$ but we will see that in the other three cases we obtain Xiao fibrations. We will study these cases separately.

\section{The case $g = 2$, $p = 5$}
\label{g2p5}
Our first task is to show that the fibers of $\psi : \cH_{2,5} \to \cM_2$ have dimension~$1$. By Remark~$\ref{psiPrym}$ it is enough to compute the dimension of the fibers of the Prym map.

Let $(E, H') \in \cH_{2,5}$ and $f: C \to E$ the associated  \'etale covering. For  $L$ a generator of $H'$ we have that $ C = \Spec \left(\bigoplus_{i=0}^4 \cO_E(L^i) \right)$ and  $C$ is a genus $6$ curve. Let $K_C$, $K_E$ be canonical bundles of $C$ and $E$ respectively. The Chevalley-Weil relations are (see e.g. \cite{N}, \cite{W}):
\[f_\ast K_C =  \bigoplus_{i=0}^4 K_E\otimes L^i
\] 
\[H^0(C,K_C) = \bigoplus_{i=0}^4 H^0(E, K_E\otimes L^i)
\]
Let $P = P(C, E)$ be the Prym variety: it is an abelian variety of dimension four and $\Omega^1_P \cong \left(\bigoplus_{i=1}^4 H^0(E, K_E\otimes L^i) \right) \otimes \cO_P$ since these are the $1$-forms not invariant under the action of the covering group. Hence
\[
H^0(P, \Omega^1_P) \cong \bigoplus_{i=1}^4 H^0(E, K_E\otimes L^i)
\]
Under the inclusion $P\subset J(C),$ the principal polarization of $J(C)$ defines a polarization~$\theta_P$ on $P$. As in Remark~\ref{psiPrym}, sending $(E, H')$ to $(P, \theta_P)$ gives the Prym map $\cP : \cH_{2, 5} \to \cA'_4$.

We have an inclusion $H' \cong \mathbb Z/5\mathbb Z$ in $\Aut P$, the automorphism group of the polarized variety $(P,\theta_P).$  Clearly  the image of $\cP$ is contained in the locus~$\cA'_4(5)$ of abelian fourfolds with $\mathbb Z/5\mathbb Z$ automorphisms.  The Zariski cotangent space to $\cA'_4(5)$ is isomorphic to the invariant subspace $\Sym^2 H^0(P,\Omega^1_P)^{H'}$ of $\Sym^2 H^0(P,\Omega^1_P)$.

The codifferential of~$\cP$ can be seen as a linear map 
\[
d\cP^* : \Sym^2 H^0(P,\Omega^1_P)^{H'} \to H^0(E, 2K_E).
\]
since $H^0(E, 2K_E)$ is isomorphic to the cotangent space  of~$\cH_{2,5}$. We have that
\[
\Sym^2 H^0(P,\Omega^1_P)^{H'} \cong \big[ H^0(K_E\otimes L) \otimes H^0(K_E\otimes L^4) \big] \oplus \big[H^0(K_E\otimes L^2) \otimes H^0(K_E\otimes L^3)\big]
\]
and $d\cP^\ast $ can be identified with the map $\mu$ induced by multiplication.

\begin{lemma} 
The map $\mu$ is injective.
\end{lemma}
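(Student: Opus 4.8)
The plan is to prove that $\mu$ is injective by analyzing each of the two summands
\[
\Sym^2 H^0(P,\Omega^1_P)^{H'} \cong \big[ H^0(K_E\otimes L) \otimes H^0(K_E\otimes L^4) \big] \oplus \big[H^0(K_E\otimes L^2) \otimes H^0(K_E\otimes L^3)\big]
\]
and the interaction between them. First I would record the numerics: since $E$ has genus $2$ and $L$ is a nontrivial degree-$0$ line bundle of order $5$, each $K_E \otimes L^i$ for $i = 1,2,3,4$ is a non-special line bundle of degree $2$, so $h^0(E, K_E\otimes L^i) = 1$ by Riemann--Roch. Hence each tensor factor is $1$-dimensional, the domain of $\mu$ is $2$-dimensional, and the target $H^0(E, 2K_E)$ has dimension $3$. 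So $\mu$ is a linear map from a $2$-dimensional space to a $3$-dimensional space, and injectivity amounts to showing that a single generator $s_1 \otimes s_4$ of the first summand and a generator $s_2 \otimes s_3$ of the second are sent to linearly independent elements of $H^0(E, 2K_E)$; equivalently, that the two effective divisors $\mathrm{div}(s_1) + \mathrm{div}(s_4)$ and $\mathrm{div}(s_2) + \mathrm{div}(s_3)$ in $|2K_E|$ are distinct as divisors.

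The key observation is a comparison of line bundles: both $K_E \otimes L \otimes K_E \otimes L^4$ and $K_E \otimes L^2 \otimes K_E \otimes L^3$ equal $2K_E \otimes L^5 = 2K_E$, so the two products indeed land in the same linear system $|2K_E|$, and there is no formal obstruction — the content is purely geometric. I would argue by contradiction: suppose $\mathrm{div}(s_1 s_4) = \mathrm{div}(s_2 s_3)$ as divisors on $E$. Each unique section $s_i$ of $K_E \otimes L^i$ has a well-defined zero divisor $Z_i$ of degree $2$. The equality of divisors forces $Z_1 + Z_4 = Z_2 + Z_3$ as degree-$4$ divisors. Now I would exploit the translation structure: the divisor classes satisfy $[Z_i] = [K_E] + i[L]$ in $\Pic^2(E)$, where $[L]$ has exact order $5$. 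One then either compares subdivisors directly — using that a genus-$2$ curve is hyperelliptic so that an effective degree-$2$ divisor is either a fiber of the $g^1_2$ (and then its class is $[K_E]$, impossible here since $i[L] \neq 0$) or is rigid, i.e. the unique effective divisor in its class — to pin down when two such sums can coincide. Rigidity gives that $Z_1 + Z_4 = Z_2 + Z_3$ entails a matching of the points, and tracking the linear equivalence classes of the matched pieces yields a relation like $2[L] = 0$ or $[L] = 0$ in $\Pic^0(E)$, contradicting that $[L]$ has order $5$.

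The main obstacle, and where the argument needs care, is ruling out the ``mixed'' splittings: when $Z_1 + Z_4 = Z_2 + Z_3$ one could a priori have a point of $Z_1$ cancel against a point of $Z_2$ rather than a clean equality $Z_1 = Z_2$, so one must handle all the ways four points can be partitioned, and also the possibility that some $Z_i$ is the hyperelliptic $g^1_2$ divisor (the one non-rigid case). I would dispatch this by noting that if any $Z_i \in |K_E|$ then its class is $[K_E]$, forcing $i[L] = 0$, which is false for $i \in \{1,2,3,4\}$; hence every $Z_i$ is a rigid degree-$2$ divisor, not of the form $p + \iota(p)$. With rigidity in hand, $Z_1 + Z_4 = Z_2 + Z_3$ as \emph{divisors} means the multisets of points agree, and since no $Z_i$ is a hyperelliptic pair, comparing the classes of the common sub-divisors (there are only a few combinatorial cases) produces in every case a nontrivial torsion relation among $[L], 2[L]$ of order dividing $5$ that is not the trivial one — impossible. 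This contradiction shows $\mathrm{div}(s_1 s_4) \neq \mathrm{div}(s_2 s_3)$, so $\mu(s_1\otimes s_4)$ and $\mu(s_2\otimes s_3)$ are linearly independent, and $\mu$ is injective.
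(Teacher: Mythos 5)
Your reduction is sound as far as it goes: since each $H^0(E,K_E\otimes L^i)$ is one-dimensional and a product of nonzero sections on an integral curve is nonzero, injectivity of $\mu$ is indeed equivalent to the divisor identity $Z_1+Z_4=Z_2+Z_3$ being impossible, and your observation that no $Z_i$ can lie in $|K_E|$ (else $i[L]=0$) correctly eliminates the splittings $Z_2=Z_1$, $Z_2=Z_4$ and any splitting in which some piece is a hyperelliptic pair. The gap is in the remaining, genuinely mixed splitting, which is precisely the heart of the matter. Write $Z_1=p+q$, $Z_4=r+s$, $Z_2=p+r$, $Z_3=q+s$. Comparing classes of the matched pieces gives only $[q]-[r]=-[L]$ and $[p]-[s]=3[L]$, which is a perfectly consistent system in $\Pic(E)$: it does \emph{not} produce a relation of the form $[L]=0$ or $2[L]=0$. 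So your claim that ``comparing the classes of the common sub-divisors produces in every case a nontrivial torsion relation'' fails exactly in the one case that is not formally trivial, and the proposal as written does not prove the lemma.

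What is missing is the extra geometric input the paper uses. Since the hyperelliptic involution $\iota$ acts as $-1$ on $\Pic^0(E)$ and fixes $K_E$, one has $\iota^*(K_E\otimes L)=K_E\otimes L^4$, so by rigidity $Z_4=\iota(Z_1)$ (and likewise $Z_3=\iota(Z_2)$); hence the four points are $P,Q,\iota(P),\iota(Q)$ and the surviving mixed case reads $Z_2=P+\iota(Q)$, $Z_3=\iota(P)+Q$. Even then the class computation only yields $L=\cO_E(\iota(Q)-Q)$, which is not yet absurd (a $5$-torsion class could a priori have this form); one needs a final rigidity step: from $K_E=\cO_E(Q+\iota(Q))$ one gets $K_E\otimes L=\cO_E(2\iota(Q))$, and since $h^0(K_E\otimes L)=1$ the divisor $P+Q$ must equal $2\iota(Q)$, forcing $P=Q=\iota(Q)$ and hence $L=\cO_E$, the desired contradiction. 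Your sketch contains neither the identification $Z_4=\iota(Z_1)$ nor this last step, so the mixed case is left open; incorporating both would essentially reproduce the paper's argument.
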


\begin{proof} Since $h^0(K_E\otimes L) = 1$ we can write $K_E\otimes L = \cO_E(P+Q)$. The hyperelliptic involution~$\iota$ on~$E$ acts as $-1$ on $J(E)$ hence we have $\cO_E(\iota(P) + \iota(Q)) = K_E\otimes L^{-1} = K_E\otimes L^4$, since the canonical bundle is invariant under automorphisms. Suppose that $\mu$ is not injective. 

We then get an equation  :
$\omega_1\cdot\omega_4+\omega_2\cdot\omega_3=0\in H^0(E,2K_E), $ where $\omega_i$ are suitable generators of $H^0(E,K_E \otimes L^i)$. This gives a relation among the divisors: 
\[
P + Q + \iota(P) + \iota(Q) = (\omega_1) + (\omega_4) = (\omega_2) + (\omega_3).
\] 
 We can then assume $\cO_E(P + \iota(Q)) = K_E \otimes L^2$ and $\cO_E(\iota(P) + Q) = K_E\otimes L^3.$ 
It then follows  $L=\cO_E(\iota(Q) - Q)$ and since $K_E = \cO_E(Q + \iota(Q))$, we have 
\[
K_E\otimes L = \cO_E(2\iota(Q))
\]
and since $h^0(K_E\otimes L) = 1$ it must be $P = Q = \iota(Q)$. But this would give $L = \cO_E$ which is  a contradiction.
\end{proof}

\begin{proposition}
The map $\psi : \cH_{2, 5} \to \cM_2$ has fibers of dimension~$1$.
\end{proposition}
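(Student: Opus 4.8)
The plan is to derive the statement as an immediate consequence of the preceding lemma and the general machinery recalled in Remark~\ref{psiPrym}. We already know from Proposition~\ref{finitefibers} that for $g=2$, $p=5$ the fibers of $\psi$ are \emph{not} finite, so they have dimension $\ge 1$; and we know $\dim \cH_{2,5} = \dim \cM_2 = 3$. So the only thing to prove is that the general fiber has dimension exactly $1$, equivalently that the image of $\psi$ (or of the Prym map $\cP$, which has fibers of the same dimension by Remark~\ref{psiPrym}) has dimension exactly $2$.

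First I would recall that, by Remark~\ref{psiPrym}, $\dim \psi^{-1}([D]) = \dim \cP^{-1}(E,H')$ for the general point, so it suffices to study the Prym map $\cP : \cH_{2,5} \to \cA'_4$. The fiber dimension of $\cP$ at a general point is $\dim \cH_{2,5} - \operatorname{rank} d\cP$ at that point, and $\operatorname{rank} d\cP = \operatorname{rank} d\cP^*$, where $d\cP^*$ has been identified with the multiplication map $\mu : \Sym^2 H^0(P,\Omega^1_P)^{H'} \to H^0(E, 2K_E)$. Now apply the previous Lemma: $\mu$ is injective. Hence $\operatorname{rank} d\cP = \dim \Sym^2 H^0(P,\Omega^1_P)^{H'}$. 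Using the splitting
\[
\Sym^2 H^0(P,\Omega^1_P)^{H'} \cong \big[ H^0(K_E\otimes L) \otimes H^0(K_E\otimes L^4) \big] \oplus \big[H^0(K_E\otimes L^2) \otimes H^0(K_E\otimes L^3)\big]
\]
and the fact (Chevalley--Weil, or Riemann--Roch on the genus~$2$ curve~$E$: each $K_E \otimes L^i$ with $L^i$ a nontrivial degree-$0$ line bundle is a non-special degree-$2$ bundle, hence $h^0 = 1$) that each of the four factors is one-dimensional, we get $\dim \Sym^2 H^0(P,\Omega^1_P)^{H'} = 1\cdot 1 + 1\cdot 1 = 2$. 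Therefore $\operatorname{rank} d\cP = 2$, so the general fiber of $\cP$, and hence of $\psi$, has dimension $3 - 2 = 1$.

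I should be slightly careful about one point: the computation above shows that the \emph{general} fiber has dimension $1$, since the differential has maximal rank $2$ on a dense open subset, and then generic smoothness gives that the general fiber is smooth of the expected dimension $1$; combined with Proposition~\ref{finitefibers} (which already rules out dimension $0$), this forces the general fiber to be exactly $1$-dimensional. One also notes $\dim \cA'_4(5) = \dim \Sym^2 H^0(P,\Omega^1_P)^{H'} = 2$ is consistent, so the Prym map is generically finite onto its image inside $\cA'_4(5)$ and the image has dimension $2$.

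The main (and really the only) obstacle here has already been dispatched in the preceding Lemma — injectivity of $\mu$. Everything else is bookkeeping: identifying the codifferential with $\mu$ (done in the text preceding the Lemma), computing the dimension of the four Chevalley--Weil summands via Riemann--Roch on a genus-$2$ curve, and invoking Remark~\ref{psiPrym} to pass between $\psi$ and $\cP$. So the proof of this Proposition is essentially a one-line corollary: $d\cP^* = \mu$ is injective with $2$-dimensional source, hence the general fiber of $\psi$ has dimension $\dim \cH_{2,5} - 2 = 1$.
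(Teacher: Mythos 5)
Your argument is correct and is essentially the paper's own proof: both use the injectivity of the codifferential $d\cP^*=\mu$ from the preceding Lemma, the fact that $\Sym^2 H^0(P,\Omega^1_P)^{H'}$ (equivalently the tangent space to $\cA'_4(5)$) is $2$-dimensional, and Remark~\ref{psiPrym} to transfer the fiber dimension from the Prym map $\cP$ to $\psi$. The only difference is cosmetic: you phrase it as a rank count $3-2=1$, while the paper says the differential is surjective so the image has dimension $\dim\cA'_4(5)=2$.
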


\begin{proof}
Look at the Prym map
\[
\cP : \cH_{2, 5} \to \cA'_4(5) \subseteq \cA'_4
\]
The codifferential is injective and so the differential is surjective. Hence the dimension of the image is~$\dim\cA'_4(5) = 2$ and so the fibers have dimension~$1$. 

By Remark~$\ref{psiPrym}$ the fibers of $\psi$ have also dimension~$1$.
\end{proof}

Let $D \in\psi(\cH_{2,5})$ a generic curve and let $X$ be an irreducible component of the fiber $\psi^{-1}(D)$. By the general construction explained in Section~$\ref{Xiaofibr}$ we obtain a surface $S_D$ with a fibration
\[
\pi : S_D \to B
\]
By what we have seen, we get

\begin{proposition}
The fibration $\pi : S_D \to B$ is a Xiao fibration with relative irregularity $q_\pi = 4$ and genus of the general fiber $g_C = 6$. 
\end{proposition}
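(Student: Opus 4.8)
The plan is to assemble the two facts already established in this section — that the fibers of $\psi:\cH_{2,5}\to\cM_2$ have dimension $1$, and the general bound $\gamma(C)^2 = 8 - 2(g-1)(p-2)$ from Lemma~\ref{C2} — together with the general construction of Section~\ref{Xiaofibr}, and then just check the numbers. First I would take $D$ generic in the image of $\psi$ and an irreducible component $X$ of $\psi^{-1}(D)$, which by the preceding proposition is a curve. Via the map $\alpha:X\to\Hilb(D\times D)$ of~\eqref{HilbSch} and the construction in~\eqref{e:fibration} this produces a fibration $\pi:S_D\to B$ with $B$ a curve, hence $S_D$ a surface, whose general fiber is a curve $C$ which is an \'etale degree-$5$ cover of a hyperelliptic curve $E$ of genus $2$; so $g_C = p(g-1)+1 = 6$.

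Next I would compute the relative irregularity. By Lemma~\ref{qrel} it equals $2g_D = (p-1)(g-1) = 4\cdot 1 = 4$, provided the hypothesis of that lemma holds, i.e.\ at least one fiber $E_t$ of the family has indecomposable Jacobian. For $g=2$ this is automatic: a generic genus-$2$ curve has irreducible Jacobian, and since the family $\{E_t\}$ has varying moduli (the fibers of $\psi$ being positive-dimensional but the family of $E$'s over $X$ being non-isotrivial), not every member can be among the countably many whose Jacobian decomposes. So $q_\pi = 4$.

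Finally I would verify the Xiao inequality is violated: $\dfrac{g_C+1}{2} = \dfrac{7}{2} = 3.5 < 4 = q_\pi$, so $\pi$ is a Xiao fibration by definition. It remains only to confirm that $\pi$ is an honest non-isotrivial fibration in the sense required — the fibers are connected (they are the curves $\gamma(C)\subset D\times D$, which are irreducible by Lemma~\ref{etale}) and the moduli vary, as noted above and in the proof of Lemma~\ref{qrel}. One should also check that $B$ really is a curve and not a point, i.e.\ that $X$ is genuinely positive-dimensional; this is exactly the content of the previous proposition, that the fibers of $\psi$ have dimension $1$, so there is nothing further to do.

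\textbf{Main obstacle.} The genuine mathematical work has already been carried out, namely the injectivity of the codifferential $\mu$ of the Prym map (the previous lemma) and the identification of $q_\pi$ in Lemma~\ref{qrel}; the present statement is a bookkeeping corollary. The only point requiring a word of care is the indecomposability hypothesis in Lemma~\ref{qrel}, but for $g=2$ this is immediate, so there is no real obstacle here.
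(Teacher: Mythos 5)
Your proposal is correct and takes essentially the same approach as the paper, whose proof of this proposition is precisely the bookkeeping you describe: the one-dimensionality of the fibers of $\psi$ (via the injectivity of the Prym codifferential) makes $B$ a curve and $S_D$ a surface whose general fiber has genus $g_C = p(g-1)+1 = 6$, while Lemma~\ref{qrel} gives $q_\pi = 2g_D = 4 > (g_C+1)/2$. The only loose point is your justification of the indecomposability hypothesis in Lemma~\ref{qrel} (the decomposable locus in $\cM_2$ is a countable union of positive-dimensional subvarieties rather than a countable set, so an irreducible one-dimensional family of curves $E_t$ could a priori lie inside one of them), but this affects only the exact equality $q_\pi = 4$ and not the inequality $q_\pi \ge 4$ needed for the Xiao property, and the paper itself passes over the same genericity issue.
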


This is case~$1$ of Theorem~$\ref{maintheorem}$.

\section {The case $g = 4$, $p = 3$}
\label{g4p3}
We present here an explicit example of a Xiao fibration. Let $\psi : \cH_{4, 3} \to \cM_3$.
In this case we can identify an irreducible component of the fiber $\psi^{-1}(D)$ as being simply the curve $D$ itself.

In fact, let $D$ be a smooth plane quartic, i.e., a non hyperelliptic curve of genus~$3$. A point $P\in D$ gives a $g^1_3$ obtained as $|K_D - P|$. Let $f_P : D \to \bP^1$ be the map given by this linear series and assume that the map~$f_P$ has simple ramification points, i.e., the point $P$ is not on any flex tangent to~$D$. Then the monodromy of $f_P$ is the symmetric group~$\cS_3$ and let $C_P \to D \to \bP^1$ be the Galois closure. Let $E_P$ be the quotient of $C_P$ by the alternating group~$\cA_3 = \bZ_3$. Then $C_P$ is a curve of genus $10$, $E_P$ is an hyperelliptic curve of genus~$4$ and the cover $C_P \to E_P$ is \'etale and hence gives a point in the fiber~$\psi^{-1}(D)$. 

Since all $g^1_3$ on $D$ are of this kind, we find a copy of (an open subset of) $D$ inside the fiber~$\psi^{-1}(D)$. We now give a geometric construction of the Galois closure and of a smooth compactification~$S$ of the fibration. This will allow us to describe completely~$S$ and compute all of its numerical invariants.

\medskip    
Let $D\subset \bP^2$ be a smooth plane quartic curve as above and let $S\subset D \times D\times D$ be defined as
\[
S = \{(P,Q,R): \exists T\in D: P+Q+R+T \in |K_D|\}.
\]
Note that for $P$, $Q$, $R$ distinct the condition simply means that the three points are collinear. We consider the projections $\pi_i: S \la D,$ $i=1,2,3$ on the three factors. The map $\beta = (\pi_2, \pi_3) : S \la D\times D$ is a surjective $2:1$ map so that $S$ is a surface. In fact 
\[
\beta^{-1}((P, Q))= \{(R, P, Q),(T, P, Q)\}
\]
where $R$ and $T$ are the two other points of intersection of the line~$PQ$ with the curve~$D$. 

\begin{theorem} 
\label{explicit}
Set  $\pi=\pi_1,$ the first projection, $\pi: S\to D.$ Then $\pi$ is a
fibration with general fibers smooth of genus~$10$ and relative irregularity greater or equal than~$6$.
\end{theorem}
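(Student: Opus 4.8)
The plan is to verify the two assertions separately: first that $\pi = \pi_1 : S \to D$ is a fibration with smooth general fiber of genus $10$, and second that $q_\pi \ge 6$.

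For the first part, I would fix a general point $P \in D$ and identify the fiber $\pi^{-1}(P)$. By definition a point of $\pi^{-1}(P)$ is a triple $(P,Q,R)$ with $P+Q+R+T \in |K_D|$ for some $T \in D$; writing $D \hookrightarrow \bP^2$ as a plane quartic, for $P$ general this says $Q$, $R$ lie on a line through $P$ together with a fourth residual point $T$. So the fiber is naturally the curve $C_P$ appearing in the dihedral diagram: indeed, sending a line $\ell$ through $P$ to its residual divisor $Q+R+T$ on $D$ (minus $P$) realizes the fiber as the curve parametrizing an ordering of the degree-$3$ fiber of the projection $f_P : D \to \bP^1$ from $P$, i.e., the $\cS_3$-Galois closure $C_P \to D \to \bP^1$ of $f_P$. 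I would invoke the earlier discussion: since $P$ is general it lies on no flex tangent, so $f_P$ has only simple branch points, the monodromy is all of $\cS_3 = D_3$, and $C_P$ is smooth of genus $g_C = p(g-1)+1 = 3\cdot 3 + 1 = 10$ by the genus formula already recorded. That $\pi$ has connected fibers follows because $C_P$ is irreducible (the monodromy is transitive). So $\pi$ is a fibration with general fiber smooth of genus $10$.

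For the relative irregularity bound, I would run exactly the argument of Lemma 3.5 (\ref{qrel}) in this explicit setting. The key point is that the fiber $C_P$ comes with its embedding $\gamma_P : C_P \hookrightarrow D \times D$ of Lemma \ref{etale}: concretely, two of the three sheets of $f_P$ are recorded by the two other intersection points, so the maps $(\pi_2,\pi_3)$ and the residual-point map assemble into such a $\gamma_P$. These embeddings fit into a morphism $S \to D \times D$ (here literally $\beta = (\pi_2,\pi_3)$, composed with the residual involution), which is dominant because the curves $E_P$ — and hence $C_P$ — vary in moduli as $P$ moves on $D$. Hence we get a surjection $\Alb(S) \to \Alb(D\times D) = J(D) \times J(D)$ through which each $(\gamma_P)_*$ factors, and by Remark \ref{r:surj} each $(\gamma_P)_*$ is already surjective onto $J(D)\times J(D)$. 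Since the image of a fiber $J(C_P)$ in $\Alb(S)$ equals the connected kernel $K$ of $\pi_* : \Alb(S) \to J(D)$ (same argument as \cite{P}, (0.5), using non-constancy of moduli), the composite $K \to \Alb(S) \to J(D)\times J(D)$ is surjective, giving $q_\pi \ge 2 g_D = (p-1)(g-1) = 2\cdot 3 = 6$.

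The main obstacle is the identification of the fiber $\pi^{-1}(P)$ with the Galois closure $C_P$ and the compatibility of the incidence correspondence defining $S$ with the abstract embedding $\gamma_P : C_P \hookrightarrow D\times D$ from Lemma \ref{etale}: one must check that the two ``residual point'' maps $S \dashrightarrow D$ really do reproduce the two non-$j$-invariant quotients $\rho_j, \rho_{j_1}$ of $C_P$, so that $\beta$ (up to the residual involution) is the family version of $\gamma$. Once that dictionary is in place, everything else is bookkeeping: irreducibility of $S$ (equivalently connectedness of general fibers) and the moduli-variation input needed to force the image of $J(C_P)$ to be all of $K$ and the map to $J(D)\times J(D)$ to be dominant. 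The word ``general'' in the statement absorbs the locus of $P$ on flex tangents and any other bad locus. Note the bound is only ``$\ge 6$'' here; the sharper analysis (normalizing the singular limit fibers to bring the geometric genus of the general fiber down from $7$, wait—here $g=4$ so genus is exactly $10$) is not needed in this case, and $6 = \lceil (10+1)/2\rceil$ already certifies a Xiao fibration.
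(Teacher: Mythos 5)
Your proposal is correct in substance, but it takes a genuinely different route from the paper, and the one step you yourself flag as the ``main obstacle'' is precisely the step the paper's argument is designed to avoid. For the genus and smoothness of the general fiber, the paper never passes through the abstract Galois closure: it computes the numerical class of $X_P=\beta(C_P)$ in $D\times D$ (the intersection numbers with $D_1$, $D_2$, $\Delta$ together with the Hodge index theorem give $X_P\equiv 3(D_1+D_2)-\Delta$), obtains arithmetic genus $10$ by adjunction, and then checks smoothness by an explicit local-coordinate computation at the ramification points of $f_P$ (local equation $x=-y$ at a simple ramification point, a singular point $x^2+xy+y^2=0$ exactly when $P$ lies on a flex tangent); this analysis is reused later to locate the $24$ singular fibers and compute $c_2(S)$, and the class of $X_P$ is reused in Theorem~\ref{invariants}. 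You instead identify $\pi^{-1}(P)$ with the Galois closure of $f_P$ and quote the genus formula $g_C=p(g-1)+1=10$ plus Lemma~\ref{etale}; this works, but only once the dictionary is actually established, namely that $\beta$ restricted to a fiber is (up to the identifications of the two quotients with $D$) the embedding $\gamma$ of Lemma~\ref{etale}, so that the incidence curve is the isomorphic image of the smooth Galois closure. That verification is routine ($\cS_3$ is $2$-transitive, so the incidence curve is irreducible, and $\gamma(C_P)$ and $X_P$ are irreducible closed curves agreeing over the unbranched locus and of the same degree over $\bP^1$), but in your write-up it is asserted rather than carried out, whereas the paper's direct computation makes it unnecessary. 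For $q_\pi\ge 6$ you rerun the argument of Lemma~\ref{qrel} via Remark~\ref{r:surj}; note that in this explicit setting no moduli-variation input is needed, since $\beta$ is visibly a surjective $2:1$ map and only the inequality is claimed, so your appeal to variation of the $E_P$ can simply be dropped (it again rests on the same dictionary). The paper argues differently here too: Ramanujan vanishing plus the Hodge index theorem give an injection $\Pic^0(D\times D)\hookrightarrow\Pic^0(C_P)$, hence the fixed part $J(D)\times J(D)$ actually embeds (a statement stronger than the isogeny-level surjectivity you use), and dualizing yields $q_\pi\ge 6$. Both routes are valid; yours is shorter modulo the unproved identification, the paper's is self-contained and yields byproducts needed later.
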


This is case~$2$ of Theorem~$\ref{maintheorem}$.

\begin{proof}
   To compute the genus of the fiber $C_P=\pi^{-1}(P)$ we let $k : C_P\to S$ be the inclusion. The restriction of $\beta$ gives a natural inclusion $\beta_P = \beta\circ k :  C_P\to D\times D$ and let $X_P = \beta(C_P)$ be the image. Since $C_P$ and $X_P$ are isomorphic, we  compute the arithmetic genus of~$X_P$. To do this, we determine the class of $X_P$ in $D\times D$ under numerical equivalence. 
       
Let $f_P: D\to \bP^1$ be the $3:1$ map obtained by projecting the plane curve~$D$ from the point $P$.
Since $C_P$ is given by triples $(P, Q, R) \in S$ with $P$~fixed, then $X_P$ is the closure of 
\[
\{(Q,R)\in D\times D \mid Q\neq R, f_P(Q) = f_P(R)\},
\]

Let $D_1 = \{P\} \times D$ and $D_2 = D \times \{P\}$ and $\Delta$ be the diagonal in~$D\times D$.
The self-intersection number $X_P^2$ can be computed by taking another point~$Q \in D$ and computing $X_P\cdot X_Q =\{(R,T), (T,R)\}$, where $R$ and $T$ are the two other points of intersection of the line~$PQ$ with the curve~$D$. Hence $X_P^2 = 2$. Moreover, $X_P\cdot D_1 = X_P\cdot D_2 = 2$ and by the Hurwitz formula $X_P\cdot \Delta = 10$.

Let now $H = 3(D_1 + D_2)-\Delta$. One has $H^2 = H\cdot D_1 = H\cdot D_2 = 2$ and hence $(H - X_P)\cdot (D_1 + D_2) = 0$. Since
\[
X_P \cdot H = X_P \cdot (3(D_1 + D_2)-\Delta) = 12 - 10 = 2
\]
we have 
\[
X^2_P = H^2 = X_P\cdot H = 2
\]
and hence
\[
(H - X_P)^2 = 0.
\]
Then by the Hodge index theorem $X_P$ is numerically equivalent to $H = 3(D_1 + D_2) - \Delta$ and using the adjunction formula $X_P$ has arithmetic genus~$10$.

We now show that $X_P$ is smooth unless the curve~$D$ has a flex $Q$ such that $|3Q + P| = K_D$. In fact, let $Q \in D$ be a simple ramification point for $f_P$. Choose a local coordinate~$z$ on $D$ centered at~$Q$ and a local coordinate~$w$ on $\bP^1$ centered at~$f_P(Q)$ such that in these coordinates the map~$f_P$ is given by $w = z^2$. Using the local coordinates on~$D\times D$ centered at~$(Q, Q)$ induced by~$z$, the points on the curve~$X_P$ different than $(Q,Q)$ are the pairs~$(x, y)$ such that~$x^2 = y^2$ and $x \ne y$. Then a local equation for~$X_P$ is~$x = - y$ which is smooth. If instead $|3Q + P| = K_D$, then there are similar coordinates systems such that locally the map is given by~$w = z^3$ and the above reasoning shows that a local equation for~$X_P$ is $x^2 + xy + y^2 = 0$, which is singular at the origin. 

Since $C_P$ is isomorphic to $X_P$ we obtain that the fibers of $\pi : S \to D$ are generically smooth of genus~$10$.

By Corollary~$\ref{isog}$ we know  that $J(C_P)$ has a fixed part of dimension~$6$ isogenous to~$J(D) \times J(D)$. Since $C_P$ is big and nef, we can prove that this fixed part is isomorphic to~$J(D) \times J(D)$ by showing that there is an inclusion
\[
J(D)\times J(D) = \Pic^0(D \times D) \hookrightarrow \Pic^0(C_P) = J(C_P).
\]
The proof is standard: Ramanujan vanishing gives an injection 
\[
H^1(D\times D, \cO_{D\times D}) \to H^1(C_P, \cO_{C_P})
\]
so if $L \in \Pic^0(D \times D)$ goes to zero in~$\Pic^0(C_P)$, then $L$ must be torsion. Then $L$ gives an unramified cyclic cover~$X$ of $D\times D$. Since $L$ is trivial on $C_P$, the pull-back of $C_P$ to $X$ splits in a number of connected components. Each component has positive self-intersection and they don't meet, and this contradicts the Hodge index theorem.

 Then the image of dual map $J(C_P)\to \Alb(S)$ has dimension $\geq 6.$
   It follows that the relative irregularity $q_\pi\geq 6$.    
\end{proof}   
 \bigskip

\begin{remark} 
\label{r:S_smooth}
A similar computation in local coordinates shows that the surface~$S$ is smooth if all the flexes are simple. When there are flexes of order four, the surface is singular. 
\end{remark}

\begin{remark} Let $\varphi : D\times D\times D \to D^{(3)}$ be the quotient map to the symmetric product. Then the image of the surface~$S$ is $D^1_3$, the set of divisors of degree~$3$ and $h^0 \ge 2$. $D^1_3$ is a ruled surface over~$D$ and the lines in the ruling are the~$g^1_3$ of~$D$.
\end{remark}

\begin{remark}
\label{sym2}
The curves $X_P$ can also be constructed in the following way: let $\varphi : D\times D \to D^{(2)}$ be the quotient map to the symmetric product and let 
\[
D_P = \{ P + Q \mid Q \in D\} \subseteq D^{(2)}
\]
($P$ is fixed). Let $\tau : D^{(2)} \to D^{(2)}$ be the canonical involution given by $\tau(P + Q) = R + T$ where $P + Q + R + T$ is a canonical divisor. Then
\[
X_P = \varphi^{-1} \left( \tau(D_P) \right)
\]
This also shows that $X_P$ is an ample divisor in $D\times D$.
\end{remark}

\begin{remark} There is an $\cS_4$-action on $S$: one can define
\[
S = \{(P, Q, R, T):  P+Q+R+T \in |K_D|\} \subset D \times D\times D \times D
\]
The action is obvious. $\pi : S \to D$ is always a fibration and there is an $\cS_3$-action on the fibers, which are the $C_P$.
\end{remark}

We compute now the numerical invariants of $S$.

\begin{theorem} For a generic $D$ the invariants of the surface $S$ are:
\label{invariants}
\begin{enumerate}
\item $q_S = 9$,
\item $c_2(S) = 96$,
\item $K_S^2 = 216$,
\item $p_g = 34$.
\end{enumerate}
\end{theorem}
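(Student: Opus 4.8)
The strategy is to exploit the description of $S$ as a double cover of $D\times D$. The map $\beta=(\pi_2,\pi_3):S\to D\times D$ is finite of degree~$2$, and for a generic plane quartic~$D$ the surface $S$ is smooth (Remark~\ref{r:S_smooth}); hence $\beta$ is a smooth double cover with smooth branch curve $B\subset D\times D$ and $\cO_{D\times D}(B)=L^{\otimes 2}$ for a line bundle~$L$. Geometrically $B$ is the image under~$\beta$ of the fixed locus of the covering involution $\sigma$ (which exchanges the two residual points $R,T$ of the line $\overline{PQ}$), i.e.
\[
B=\{(P,Q)\in D\times D:\ K_D-P-Q\equiv 2R\ \text{for some}\ R\in D\}.
\]
First I would compute the numerical class of~$B$. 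For a generic~$D$ the N\'eron--Severi group of $D\times D$ is freely generated by $D_1=\{P\}\times D$, $D_2=D\times\{P\}$ and the diagonal~$\Delta$, with $D_1^2=D_2^2=0$, $D_1\cdot D_2=\Delta\cdot D_1=\Delta\cdot D_2=1$, $\Delta^2=2-2g(D)=-4$ and $K_{D\times D}\equiv 4(D_1+D_2)$. Since $B$ is symmetric under the exchange of factors, $B\equiv a(D_1+D_2)+b\Delta$. Intersecting with~$D_1$ fixes the first coordinate~$P$ and forces $Q$ to be the residual point of a simple ramification fibre $2R+Q$ of the degree-$3$ map $|K_D-P|:D\to\bP^1$; by Riemann--Hurwitz there are exactly~$10$ of these, so $B\cdot D_1=10$. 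Intersecting with~$\Delta$ forces $2P+2R\in|K_D|$, i.e.\ $P$ to be a contact point of one of the $28$ bitangents of the quartic, so $B\cdot\Delta=2\cdot 28=56$. Solving $a+b=10$, $2a-4b=56$ gives $a=16$, $b=-6$, hence
\[
B\equiv 16(D_1+D_2)-6\Delta,\qquad L\equiv 8(D_1+D_2)-3\Delta .
\]
(As a consistency check, adjunction gives $g_a(B)=33$, in agreement with $B$ being a double cover of a genus-$3$ curve branched at the $56$ points of $B\cap\Delta$.)

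With $B\in|2L|$ and $\beta:S\to D\times D$ a smooth double cover, three of the four invariants follow from the standard formulas $K_S=\beta^*(K_{D\times D}+L)$, $\chi(\cO_S)=2\chi(\cO_{D\times D})+\tfrac12 L\cdot(L+K_{D\times D})$ and $c_2(S)=12\chi(\cO_S)-K_S^2$. Using $\chi(\cO_{D\times D})=(1-g(D))^2=4$ and the intersection numbers above one gets $(K_{D\times D}+L)^2=(12(D_1+D_2)-3\Delta)^2=108$, so $K_S^2=216$; $L\cdot(L+K_{D\times D})=36$, so $\chi(\cO_S)=26$; and $c_2(S)=312-216=96$. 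It then remains to compute $q_S$, after which $p_g(S)=\chi(\cO_S)-1+q_S$.

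For the irregularity, since $\pi=\pi_1:S\to D$ is a fibration with base of genus $b=g(D)=3$ and general fibre $C_P$, one has $q_S=b+q_\pi=3+q_\pi$, and $q_\pi\ge 6$ by Theorem~\ref{explicit}. For the reverse inequality I would argue as in Lemma~\ref{qrel}: $J(C_P)$ is isogenous to $J(D)\times J(D)\times J(E_P)$ by Corollary~\ref{isog}, the factor $J(D)\times J(D)$ is fixed, and the family $\{E_P\}_{P\in D}$ of hyperelliptic genus-$4$ curves is non-isotrivial (the fibre of~$\psi$ over~$[D]$ being positive-dimensional), so for a generic~$D$ a general $E_P$ has indecomposable Jacobian and contributes nothing to the fixed part of the fibration; hence $q_\pi=2g_D=6$ and $q_S=9$, whence $p_g(S)=26-1+9=34$. (Alternatively, avoiding the moduli argument, one computes $q_S=q(D\times D)+h^1(D\times D,L^{-1})=6+h^1(D\times D,L^{-1})$ from $\beta_*\cO_S=\cO_{D\times D}\oplus L^{-1}$, reducing by successive restriction to~$\Delta$ to the statement that the restriction map $H^0(\cO(12(D_1+D_2)-2\Delta))\to H^0(\cO_\Delta(12(D_1+D_2)-2\Delta))$ has a cokernel of dimension $g(D)=3$.)

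\textbf{Main obstacle.} Once $L$ is known the formal computation is routine; the two genuinely delicate points are the branch-class computation — identifying $B\cdot\Delta$ with the bitangents, and justifying the genericity of~$D$ needed both for $\mathrm{NS}(D\times D)=\bbZ D_1\oplus\bbZ D_2\oplus\bbZ\Delta$ and for $B$ (equivalently $S$) to be smooth — and the upper bound $q_S\le 9$, which is the only invariant not formally forced by the double-cover structure and requires either the non-isotriviality/indecomposability input for the family $\{E_P\}$ or the $3$-dimensional cokernel computation above.
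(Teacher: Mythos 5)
Your proposal is correct, and its backbone (the double cover $\beta : S \to D\times D$ with $L=\frac12 B$, the class $B\equiv 16(D_1+D_2)-6\Delta$, the standard double-cover formulas, and the Lemma~\ref{qrel}-type argument for $q_\pi=6$) is the same as the paper's; but two sub-computations are genuinely different. For the branch class, the paper never intersects $B$ with test curves: it writes $B=\varphi^*(\tau(\Delta))$ for the canonical involution $\tau$ on $D^{(2)}$ (Remark~\ref{sym2}) and computes $\tau_*\Delta=16D_P-6\delta$ purely by pushing forward known classes ($\tau_*D_P=3D_P-\delta$ from the class of $X_P$ found in Theorem~\ref{explicit}, and $\tau_*K_{D^{(2)}}=K_{D^{(2)}}$), whereas you determine $a,b$ from $B\cdot D_1=10$ and $B\cdot\Delta=56$ together with the assumption $\mathrm{NS}(D\times D)=\bZ D_1\oplus\bZ D_2\oplus\bZ\Delta$ for generic $D$. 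Your route is quicker but uses two inputs the paper's avoids: the generic rank-$3$ Néron--Severi statement (harmless here, since the theorem assumes $D$ generic), and the fact that the $10$ and the $56$ intersection points count with multiplicity one; the latter deserves a word (e.g.\ generic smoothness of $\mathrm{pr}_1|_B$ for $B\cdot D_1$, and odd-order contact plus your genus check for $B\cdot\Delta$) — note that your ``consistency check'' $g_a(B)=33=g(B)$ is exactly how the paper proves $B$ (hence $S$) is smooth, so it is doing real work, not just checking. For the remaining invariants the logical order is reversed: the paper computes $c_2(S)=96$ directly from the fibration $\pi:S\to D$ by counting the $24$ singular fibers (one per flex of $D$) and then gets $\chi(\cO_S)=26$ and $p_g=34$ from Noether, explicitly remarking that it does \emph{not} use the double-cover formula for $c_2$; you instead take $\chi(\cO_S)=2\chi(\cO_{D\times D})+\frac12 L\cdot(L+K_{D\times D})=26$ from the covering data and recover $c_2$ via Noether, which is the BHPVdV-style alternative the paper alludes to — each computation serves as a cross-check of the other, and your numbers ($108$, $36$, $216$, $96$, $34$) are all correct. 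Finally, your upper bound $q_\pi\le 6$ rests, exactly as the paper's Lemma~\ref{qrel} does, on some $E_P$ in the non-isotrivial family having indecomposable Jacobian for generic $D$; you are at the same level of detail as the paper there, so no gap beyond what the paper itself leaves implicit.
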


\begin{proof}
We assume that all flexes of $D$ are simple, i.e., there are no points $Q\in D$ such that $|4Q| = K_D$. Under this hypothesis, the surface~$S$ is smooth. (cf. Remark~$\ref{r:S_smooth}$).

By Lemma~$\ref{qrel}$ we have $q_\pi = 6$  and so $q_S = q_\pi + g(D) = 9$.

We have seen in the previous proof that the fibration $\pi: S \to D$ has one singular fiber for each flex of~$D$, so it has $24$~singular fibers. Then
\[
c_2(S) = \chi_{top}(S) = \chi_{top}(D) \cdot \chi_{top}(C_P) + 24 = 96.
\] 

To compute $K_S^2$, we study the map $\beta : S \to D \times D$. Let $B \subset D\times D$ be the branch locus, so that
\[
B = \{(Q, R) \in D\times D \mid \text{$\overline{QR}$ is tangent to $D$}\}
\] 
and let $R\subset S$ be the ramification locus. Then
\[
K_S = \beta^*(K_{D\times D}) + R
\]
where $R$ is such that
\[
\beta_*R = B
\]
We fix some notation: if $P\in D$ is a point, we let as before $D_1 = \{P\} \times D$ and $D_2 = D \times \{P\}$ as numerical classes. Then 
\[
K_{D\times D} = 4D_1 + 4D_2
\]
and so to compute $K_S^2$ it is enough, by the projection formula, to compute the numerical class of~$B$. 

Recall the notation of Remark~$\ref{sym2}$: $\varphi : D\times D \to D^{(2)}$ is the quotient map to the symmetric product, $\Delta = \{(Q, Q) \mid Q \in D\} \subseteq D^{(2)}$ the diagonal, $\tau : D^{(2)} \to D^{(2)}$ the canonical involution given by $\tau(P + Q) = R + T$ where $P + Q + R + T$ is a canonical divisor on~$D$ and
\[
D_P = \{ P + Q \mid Q \in D\} \subseteq D^{(2)}
\]
($P$ is fixed). The class of $\Delta$ is divisible by~$2$ and we let $\delta = \frac12\Delta$. 

As $\varphi$ has degree~$2$, from Remark~$\ref{sym2}$ we obtain that $\varphi_*X_P = 2\tau_*D_P$ and from the proof of Theorem~$\ref{explicit}$ we know that the numerical class of~$X_P$ in~$D\times D$ is $3(D_1 + D_2) -\Delta_{D\times D}$. Moreover $\varphi_*D_1 = \varphi_*D_2 = D_P$ and $\varphi^*\delta = \Delta_{D\times D}$.

Hence 
\[
\tau_*D_P =  \dfrac12 \varphi_*X_P = \dfrac12(6D_P - \Delta) = 3D_P - \delta 
\]
and
\[
K_{D^{(2)}} = 4D_P - \delta
\]
The basic intersection numbers are
\[
D_P^2 = 1, \qquad D_P \cdot \delta = 1, \qquad \delta^2 = 1 - g_D = -2
\]
Since $\tau$ is an automorphism of $D^{(2)}$, we have $\tau_*K_{D^{(2)}} = K_{D^{(2)}}$ and so the canonical class $K_{D^{(2)}}$ is invariant under~$\tau$. From this we obtain $\tau_*\delta = 8D_P - 3\delta$ and hence $\tau_*\Delta = 16D_P - 6\delta$.

Looking at the composition
\[
\xymatrix{
S  \ar[r]^\beta  & D\times D \ar[r]^\varphi & D^{(2)}
}
\]
we have $\varphi^*(\delta) = \Delta_{D\times D}$ and observe that $\varphi^*(\tau(\Delta)) = B$, the branch locus of~$\beta$. In fact, if $(P, P)\in \Delta$, then $\tau(P,P) = Q + R$ and the line $\overline{QR}$ is tangent to $D$ and hence $\tau(P,P) \in B$. We finally obtain
\[
B = 16(D_1 + D_2) - 6\Delta_{D\times D}
\]
and we note that from the genus formula on $D\times D$ we have $\Delta_{D\times D}^2 = -4$.

We now show that $B$ is smooth. From the numerical class we can compute the arithmetic genus:
\[
p_a(B) = 1 + \dfrac12\,(B^2 + B\cdot K) = 33.
\]
On the other hand, the map $B \to D$ sending the point $(Q, R)$ to $P\in D$ where $Q + R + 2P$ is a canonical divisor of~$D$ is a double covering and since all flexes are simple it is ramified at the $56$~points $(Q, Q)$ where the tangent line is a bitangent.
The Riemann-Hurwitz formula then gives $g(B) = 33$ and so the geometric genus is equal to the arithmetic genus and hence $B$ is smooth. This shows again that $S$ is smooth.

We can then use the formula for the invariants of double coverings on page~$237$ of~\cite{BHPVdV}:
\[
K^2_S = 2 K_{D\times D}^2 + 4 L\cdot K_{D\times D} + 2 L \cdot L
\]
where $L = \dfrac12 B = 8(D_1 + D_2) - 3\Delta_{D\times D}$
to obtain
\[
K_S^2 = 216.
\]

From Noether's formula we also get $\chi(\cO_S) = 26$ and hence $p_g = 34$.
\end{proof} 

\begin{remark}
In the formulas given in~\cite{BHPVdV} there is also one for~$c_2(S)$, expressed in terms of the intersection product and $c_2(D\times D)$. Our computation is different since it uses the structure of~$S$ as a fibration. 
\end{remark}

\section{The case $g = 3$, $p = 3$}
\label{g3p3}
In this case, the map $\psi : \mathcal{H}_{3,3} \to \mathcal{M}_2$ has fibers $F = \psi^{-1}([D])$ of dimension~$2$. 
Using the construction of section~$\ref{Xiaofibr}$, a curve~$X \subset F$ gives a fibration $\pi : S \to B$.
For a general~$X$, the fibrations does not contradict Xiao's conjecture since $g_C = 7$ and the relative irregularity is $2g_D = 4$. We then look for special covers $D \to \bP^1$ so that the Galois closure~$C$ has geometric genus~$6$.

Let $D$ be a curve of genus~$2$, $P\in D$ not a Weierstrass point and let $f_P : D \to \bP^1$ be the map given by the linear series $|3P|$. Note that this $g^1_3$ is base point free since $P$ is not a Weierstrass point. We now do a construction similar to the previous case. Define the curve~$C_P$ as the closure of 
\[
\{(Q,R)\in D\times D \mid Q\neq R, f_P(Q)=f_P(R)\}
\]
and the induced map $\rho : C_P \to D$  of degree~$2$ is given by $\rho(Q,R) = T$ where $|Q + R + T| = |3P|$.

As in the proof of Theorem~$\ref{explicit}$, we can show that $C_P$ has a simple node at the point $(P,P) \in D \times D$. Choose a local coordinate~$z$ on $D$ centered at~$P$ and a local coordinate~$w$ on $\bP^1$ centered at~$f_P(P)$. In these coordinates the map is given locally by~$w = z^3$ and using the local coordinates on~$D\times D$ centered at~$(P, P)$ induced by~$z$, the points on the curve~$C_P$ different than $(P, P)$ are the pairs~$(x, y)$ such that~$x^3 = y^3$ and $x \ne y$. Then a local equation for~$C_P$ is $x^2 + xy + y^2 = 0$, which has a simple node at the origin. 

The curve $C_P$ is smooth in all other points~$Q$ unless $|3P| = |3Q|$. Since the $3$-torsion points in $J(D)$ are finite, for $P$ generic there are no such points~$Q$.

In this way we have a family $S_1$ parametrized by $D$ itself. We can describe this family explicitely in a way similar to the previous case: let $S_1 \subset D \times D \times D$ defined as
\[
S_1 = \{(P,Q,R): \exists T\in D: |3P| = |Q + R + T|\}.
\]
The projection on the first factor $\pi_1 : S_1 \to D$ has fibers the curves $C_P$ described above when $P$ is not a Weiertrass point and the map has a section $s : D \to S_1$ given by $s(P) = (P, P, P)$.

All the fibers of the fibration $\pi : S_1 \to D$ are singular and desingularizing along the section we obtain a new fibration
$\pi: S \to D$   with general smooth fiber of genus~$6$ and relative irregularity (at least)~$4$ and so we get a Xiao fibration. We note that the numbers are the same as in the case of $p = 5$, $g = 2$.

This is case~$3$ of Theorem~$\ref{maintheorem}$, which is now completely proved.

\noindent

\end{document}